\titleformat{\section}[block]{\Large\bfseries}{\thesection.}{3pt}{} 
\titleformat{\subsection}[runin]{\normalfont\bfseries}{\thesubsection.}{3pt}{}[.] 
\def\RR{\mathbb{R}} 
\def\NN{\mathbb{N}} 
\def\EE{\mathbb{E}} 
\def\MM{\mathcal{M}}
\def\P{\mathcal{P}} 
\def\Q{\mathcal{Q}} 
\def\R{\mathcal{R}} 
\def\v{\mathbf{v}}
\def\V{\mathbf{V}}
\def\Z{\mathbf{Z}}
\def\z{\mathbf{z}}
\def\ind{\mathbf{1}} 
\def\ii{\mathbf{i}} 
\def\Law{\textnormal{Law}} 
\def\-{\text{-}} 
\newtheorem{thm}{Theorem}
\newtheorem{lem}[thm]{Lemma}
\newtheorem{defi}[thm]{Definition}
\theoremstyle{definition}
\newtheorem{rmk}[thm]{Remark}
\title{Uniform propagation of chaos for the thermostated Kac model}
\author{Roberto Cortez\footnote{Universidad Andr\'es Bello, Departamento de Matem\'aticas. E-mail: \texttt{roberto.cortez.m@unab.cl}. Supported by Iniciaci\'on Fondecyt Grant 11181082 and by Programa Iniciativa Científica Milenio through Nucleus Millenium Stochastic Models of Complex and Disordered Systems}
\, and Hagop Tossounian\footnote{Universidad de Chile, DIM-CMM, E-mail: \texttt{htossounian@cmm.uchile.cl}. Supported by Programa Iniciativa Científica Milenio through Nucleus Millenium Stochastic Models of Complex and Disordered Systems.
}}
\begin{document}

\maketitle

\begin{abstract}

We consider Kac's 1D $N$-particle system coupled to an ideal thermostat at temperature $T$, introduced by Bonetto, Loss, and Vaidyanathan in 2014.  We obtain a propagation of chaos result for this system, with explicit and uniform-in-time rates of order $N^{-1/3}$ in the $2$-Wasserstein metric. We also show well-posedness and equilibration for the limit kinetic equation in the space of probability measures. The proofs use a coupling argument previously introduced by Cortez and Fontbona in 2016.

\end{abstract}

\section{Introduction and main result}


\subsection{Thermostated Kac particle system}

We are interested in Kac's 1D particle system, subjected to interactions against particles taken from an ideal external thermostat, as studied for instance in \cite{bonetto-loss-vaidyanathan2014,tossounian-vaidyanathan2015}. It can be described as follows: consider $N$ particles characterized by their one-dimensional velocities, subjected to two types of random interactions:
\begin{description}
	\item[Kac collisions:] at rate $\lambda N$, randomly select two particles in the system and update their velocities $v,v_*\in\RR$ according to the rule
	\begin{equation}
	\label{eq:kac}
	\left( \begin{array}{c}
	v \\
	v_*
	\end{array} \right)
	\mapsto
	\left( \begin{array}{c}
	v' \\
	v_*'
	\end{array} \right) :=
	\left( \begin{array}{c}
	v\cos\theta - v_*\sin\theta \\
	v\sin\theta + v_*\cos\theta
	\end{array} \right),
	\end{equation}
	where $\theta \in [0,2\pi)$ is selected uniformly at random. This rule preserves the energy: $v^2 + v_*^2 = (v')^2 + (v_*')^2$.
	
	\item[Thermostat interactions:] at rate $\mu N$, randomly select a particle in the system and update its velocity $v\in\RR$ according to the rule
	\begin{equation}
	\label{eq:thermostat}
	    v \mapsto v\cos\theta - w\sin\theta,
	\end{equation}
	where $\theta$ is again selected uniformly at random on $[0,2\pi)$, and $w$ is sampled with the Gaussian density $\gamma(w) = (2\pi T)^{-1/2} e^{-w^2/2T}$. This can be seen as an interaction against a particle taken from an \emph{ideal thermostat}, that is, from an infinite reservoir at thermal equilibrium with temperature $T>0$.
\end{description}

Here $\lambda>0, \mu>0$ are given fixed constants representing the rate of Kac and thermostat collisions, respectively. The initial velocities of the $N$ particles are chosen according to some prescribed symmetric distribution $f_0^N$ on $\RR^N$, and all previous random choices are made independently. These rules unambiguously specify the law of the particle system as an $\RR^N$-valued pure-jump continuous-time Markov process, whose state at time $t\geq 0$ is denoted $\V_t = (V_t^1,\ldots,V_t^N)$, and we also write $f_t^N = \Law(\V_t)$ for its symmetric distribution. For simplicity, in our notation we omit the dependence on $N$ in the particle system $\V_t$.

Kac's original model \cite{kac1956}, corresponding to the case $\mu=0$, represents the evolution of a large number of indistinguishable particles that exchange energies via random collisions in a one dimensional caricature of a gas, as a simplification of the more realistic spatially homogeneous Boltzmann equation. The form of the collision rule \eqref{eq:kac} implies that the average energy $\frac{1}{N}\sum_i (V_t^i)^2$ is preserved a.s., and one typically assumes that the initial average energy is a.s.\ equal to $1$. Thus, $f_t^N$ is supported on the sphere $S^N = \{\v\in\RR^N : \sum_i (v^i)^2 = N\}$ for all $t\geq 0$, and the dynamics has $\sigma^N$, the uniform measure on $S^N$, as the unique stationary distribution. Kac worked with initial conditions $f_0^N$ having a density in $L^2(S^N,\sigma^N)$, for which we now know that $f_t^N$ equilibrates exponentially fast in the $L^2$ norm, with rates uniform in $N$, see \cite{carlen-carvalho-loss2000,janvresse2001}. However, the $L^2$ norm is a crude upper bound for the $L^1$ norm; moreover, the $L^2$ norm of typical initial distributions $f_0^N$ with a near-product structure (specifically, \emph{chaotic} sequences, see below) grows exponentially with $N$, which means that one has to wait a time proportional to $N$ in order for the $L^2$ bound to start providing evidence of convergence. Thus, one looks for alternative ways to quantify equilibration, such as convergence in relative entropy. The relative entropy of near-product measures grows linearly (and not exponentially) with $N$, which is a crucial advantage over the $L^2$ norm. The usual approach is to control the entropy production, in order to obtain an exponential rate of equilibration in relative entropy. Unfortunately, there exist sequences of initial distributions for which the entropy production degenerates as $N\to\infty$, as shown in \cite{einav2011}. It is worth noting, however, that the sequence constructed in \cite{einav2011} is physically unlikely, in the sense that $f_0^N$ gives half the total energy of the system to a small fraction of the particles. This raises the question of whether there is a smaller, but still rich, class of initial conditions for which one can have good control on the entropy production. We refer the reader to \cite{carlen-carvalho-leroux-loss-villani2010} for more details about Kac model and equilibration in relative entropy.

Picking up the challenge of choosing good (physical) initial conditions, and in order to avoid the badly behaved initial distributions for which entropy production degenerates, Bonetto, Loss, and Vaidyanathan \cite{bonetto-loss-vaidyanathan2014} introduced the model \eqref{eq:kac}-\eqref{eq:thermostat}, called the \emph{thermostated Kac particle system}, to describe a system in which all but a few particles are at equilibrium. This thermostated particle system no longer preserves the energy, so $f_t^N$ is supported on the whole space $\RR^N$, and the equilibrium distribution is the $N$-dimensional Gaussian with density $\gamma^{\otimes N}(\v) = \prod_i \gamma(v^i)$. In this case, the system approaches equilibrium in relative entropy exponentially fast, with rates uniform in the number of particles, see \cite[Theorem 3]{bonetto-loss-vaidyanathan2014}. Later, in \cite{bonetto-loss-tossounian-vaidyanathan2017}, the use of the ideal thermostat \eqref{eq:thermostat} was justified by approximating it with a finite but large reservoir of particles at equilibrium in a quantitative way.

\subsection{Propagation of chaos}

Besides the long-time behaviour of the particle system, one can also study convergence of $f_t^N$ as $N\to\infty$. Notice however that this is not an easy task, because even if we consider particles whose velocities are independent at $t=0$, the collisions amongst them will destroy this independence for later times. Nevertheless, for the thermostated Kac system, one expects the correlations between particles to become weaker as $N$ grows. The following concept formalizes this idea of asymptotic independence:

\begin{defi}[chaos]
For each $N\in\NN$, let $f^N$ be a symmetric probability measure on $\RR^N$. The collection $(f^N)_{N\in\NN}$ is said to be \emph{chaotic} with respect to some given probability measure $f$ on $\RR$, if for all $k\in\NN$, the marginal distribution of $f^N$ on the first $k$ variables converges in distribution, as $N\to\infty$, to the tensor product measure $f^{\otimes k}$. That is: for every $k\in\NN$ and every bounded and continuous function $\phi :\RR^k \to \RR$, it holds
\[
\lim_{N\to\infty} \int_{\RR^N} \phi(v^1,\ldots,v^k) f^N(d\v)
= \int_{\RR^k} \phi(v^1,\ldots,v^k) f(d v^1) \cdots f(dv^k).
\]
\end{defi}

For Kac's model, that is, when $\mu=0$, we know that if the sequence $( f_0^N )_{N\in \NN}$ is chaotic to some probability measure $f_0$ on $\RR$, then for all $t\geq 0$ the sequence $( f_t^N )_{N\in\NN}$ will also be chaotic to some $f_t$; this property is known as \emph{propagation of chaos}. The limit $f_t$ is the solution to the so-called \emph{Boltzmann-Kac equation}, which reads
\begin{equation}
\label{eq:Boltzmann-Kac}
\frac{d f_t}{dt}(v)
= 2\lambda \int_\RR \int_0^{2\pi} [f_t(v')f_t(v_*') - f_t(v)f_t(v_*)] \frac{d\theta}{2\pi} dv_*,
\end{equation}
in the case where $f_0$, and thus every $f_t$, has a density. This was first shown by Kac \cite{kac1956} in the special case where $f_t^N$ has a density in $L^2(S^N,\sigma^N)$. The solution to \eqref{eq:Boltzmann-Kac} also preserves the initial energy, i.e., $\int v^2 f_t(dv) = \int v^2 f_0(dv) = 1$ for all $t\geq 0$. It is straightforward to verify that the Gaussian density with energy 1 is a stationary distribution of the equation, and it is known that the solution converges to it, see for instance \cite{hauray2016,kac1956}.

When we introduce the thermostat to Kac's original model, propagation of chaos still holds, as shown in \cite[Theorem 5]{bonetto-loss-vaidyanathan2014}, and the limit density satisfies
\begin{equation}
\label{eq:thermostatedBoltzmannKac}
\begin{split}
\frac{d f_t}{dt}(v)
&= 2\lambda \int_\RR \int_0^{2\pi} [f_t(v')f_t(v_*') - f_t(v)f_t(v_*)] \frac{d\theta}{2\pi} dv_* \\
& \quad {} +\mu \int_\RR \int_0^{2\pi} [f_t(v') \gamma(v_*') - f_t(v) \gamma(v_*)] \frac{d\theta}{2\pi} dv_*,
\end{split}
\end{equation}
which we refer to as the \emph{thermostated Boltzmann-Kac equation}, or simply the \emph{kinetic equation}. As with the particle system, the solution to \eqref{eq:thermostatedBoltzmannKac} does not preserve the initial energy, and its equilibrium distribution is $\gamma$, the Gaussian density with energy $T$. When the initial condition $f_0$ has a density with finite relative entropy, it follows from \cite[Propostition 15]{bonetto-loss-vaidyanathan2014} that there is exponential convergence to equilibrium in relative entropy. In Definition \ref{def:weak} we will provide a notion of weak solution for \eqref{eq:thermostatedBoltzmannKac}, which will allow us to work with probability measures instead of densities. Using this notion, we give an existence and uniqueness result in Theorem \ref{thm:well_posedness}.



\subsection{Main result}

Chaoticity, and thus propagation of chaos, can be made quantitative. For Kac's model this was done in \cite{cortez2016} using Wasserstein distances, defined below, and providing explicit convergence rates in $N$ which are uniform in time. Similar quantitative results for the spatially homogeneous Boltzmann equation can be found for instance in \cite{cortez-fontbona2018,mischler-mouhot2013}.

The goal of the present article is to strengthen the propagation of chaos result for the thermostated Kac model in \cite{bonetto-loss-vaidyanathan2014}, by making it quantitative in $N$ with rates that are uniform in time. To quantify chaos we will use the following metric: given $f,g$ probability measures on $\RR^k$, their \emph{$2$-Wasserstein distance} is given by
\[
W_2(f,g)
= \left( \inf_{\mathbf{X},\mathbf{Y}} \EE\left[ \frac{1}{k} \sum_{i=1}^k (X^i - Y^i)^2 \right] \right)^{1/2},
\]
where the infimum is taken over all pairs of random vectors $\mathbf{X} = (X^1,\ldots,Y^k)$ and $\mathbf{Y} = (Y^1,\ldots,Y^k)$ such that $\Law(\mathbf{X}) = f$ and $\Law(\mathbf{Y}) = g$. This defines a distance in the space of probability measures with finite second moment. The infimum is always achieved by some $(\mathbf{X},\mathbf{Y})$, and such a pair is called an \emph{optimal coupling}; see \cite{villani2009} for details.

We will use the following characterization of chaoticity, see for instance \cite{sznitman1989}: a sequence $(f^N)_{N\in\NN}$ is $f$-chaotic if and only if for a sequence of random vectors $\mathbf{X}$ on $\RR^N$ with $\Law(\mathbf{X}) = f^N$, it holds that the sequence of random empirical measures
\[
\bar{\mathbf{X}}
:= \frac{1}{N} \sum_{i=1}^N \delta_{X^i}
\]
almost surely converges to the constant probability measure $f$. We can now state our main result.


\begin{thm}[uniform propagation of chaos]
\label{thm:UPoC}
Assume that $\int |v|^r f_0(dv) < \infty$ for some $r>4$. Let $(\V_t)_{t\geq 0}$ be the thermostated Kac $N$-particle system described by \eqref{eq:kac}-\eqref{eq:thermostat}, and let $(f_t)_{t\geq 0}$ be the unique weak solution of \eqref{eq:thermostatedBoltzmannKac}. Then there exists a constant $C$ depending only on $\lambda$, $\mu$, $T$, $r$, and $\int |v|^r f_0(dv)$, such that for all $t\geq 0$ we have:
\begin{equation}
\label{eq:UPoC}
    \EE[W_2^2(\bar{\V}_t, f_t)]
    \leq 4 e^{-\frac{\mu}{2} t} W_2^2(f_0^N, f_0^{\otimes N})
     + \frac{C}{N^{1/3}}.
\end{equation}
\end{thm}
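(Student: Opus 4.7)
Following the coupling strategy of Cortez \cite{cortez2016}, I would construct, on a common probability space, a coupled pair of systems $(\V_t, \Z_t)_{t \geq 0}$, where $\Z_t = (Z^1_t, \ldots, Z^N_t)$ consists of $N$ copies of the nonlinear Markov process associated with \eqref{eq:thermostatedBoltzmannKac}, each with marginal law $f_t$. Initially, $(V^i_0, Z^i_0)_{i=1}^N$ realizes an optimal coupling so that $\tfrac{1}{N}\sum_i \EE[(V^i_0 - Z^i_0)^2] = W_2^2(f_0^N, f_0^{\otimes N})$. The two systems share randomness as much as possible: at each thermostat event of particle $i$ (rate $\mu$), the same angle $\theta$ and Gaussian sample $W$ drive both $V^i$ and $Z^i$; at each Kac event $\{i,j\}$ in $\V_t$ (rate $\tfrac{2\lambda}{N-1}$) with angle $\theta$, Kac-type jumps are simultaneously triggered for $Z^i$ and $Z^j$ with the same $\theta$, using fresh partners $Y^i, Y^j \sim f_t$ optimally coupled with $V^j_{t^-}$ and $V^i_{t^-}$, respectively.

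The central step is a differential inequality for $h(t) := \EE[(V^1_t - Z^1_t)^2]$, which by exchangeability equals $\EE[(V^i_t - Z^i_t)^2]$ for all $i$. A thermostat event sends $(V^i - Z^i)$ to $(V^i - Z^i)\cos\theta$ (the shared $W$ cancels); averaging $\cos^2\theta = \tfrac{1}{2}$ yields a contribution $-\tfrac{\mu}{2} h(t)$. A Kac event sends $(V^i - Z^i)$ to $(V^i - Z^i)\cos\theta - (V^j - Y^i)\sin\theta$, whose $\theta$-averaged square is $\tfrac{1}{2}[(V^i - Z^i)^2 + (V^j - Y^i)^2]$; by optimality of the coupling, $\EE[(V^j - Y^i)^2] = W_2^2(\Law(V^j_t), f_t)$, which is bounded by $\EE[(V^j_t - Z^j_t)^2] = h(t)$ using $(V^j_t, Z^j_t)$ itself as a valid (sub-optimal) coupling. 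The Kac contributions therefore cancel, and Gr\"onwall gives $h(t) \leq e^{-\mu t/2} W_2^2(f_0^N, f_0^{\otimes N})$.

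From the triangle inequality $W_2^2(\bar{\V}_t, f_t) \leq 2 W_2^2(\bar{\V}_t, \bar{\Z}_t) + 2 W_2^2(\bar{\Z}_t, f_t)$ and the coupling bound $W_2^2(\bar{\V}_t, \bar{\Z}_t) \leq \tfrac{1}{N} \sum_i (V^i_t - Z^i_t)^2$, the problem reduces to estimating $\EE[W_2^2(\bar{\Z}_t, f_t)]$. For this, I would first prove a uniform-in-time moment bound $\sup_{t \geq 0} \int |v|^r f_t(dv) < \infty$ by testing \eqref{eq:thermostatedBoltzmannKac} against $v \mapsto |v|^r$ and exploiting the contraction coming from the thermostat, and then apply a Fournier--Guillin-type empirical estimate to conclude with the rate $N^{-1/3}$.

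The main obstacle is precisely this last empirical term: under the above coupling the $Z^i_t$'s are exchangeable with each marginal $f_t$ but are not jointly independent, since $Y^i$ is coupled to $V^j$. A natural workaround is to introduce an auxiliary truly i.i.d.\ system $\hat{\Z}_t$ of nonlinear processes and split $\EE[W_2^2(\bar{\Z}_t, f_t)] \leq 2\,\EE[W_2^2(\bar{\Z}_t, \bar{\hat{\Z}}_t)] + 2\,\EE[W_2^2(\bar{\hat{\Z}}_t, f_t)]$: the second term is then controlled by the classical Fournier--Guillin bound in $\RR$ under the moment hypothesis $r > 4$, while the first is handled by a secondary coupling between $\Z_t$ and $\hat{\Z}_t$ whose quality, together with the uniform moment estimate, gives the stated $N^{-1/3}$ rate.
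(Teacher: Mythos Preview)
Your overall architecture---couple $\V_t$ to a system $\Z_t$ of nonlinear processes, derive a differential inequality for $h(t)=\EE[(V^1_t-Z^1_t)^2]$, and handle $\EE[W_2^2(\bar\Z_t,f_t)]$ via an auxiliary i.i.d.\ system plus Fournier--Guillin---matches the paper. The moment propagation and the triangle-inequality split are also as in the paper.

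The genuine gap is in your choice of coupling for the Kac partner. You take $Y^i$ optimally coupled to $V^j_{t^-}$, and then use two facts: (a) $\EE[(V^j_{t^-}-Y^i)^2]=W_2^2(\Law(V^j_t),f_t)\le \EE[(V^j_t-Z^j_t)^2]=h(t)$, and (b) $\Law(Z^i_t)=f_t$. Both facts require that each $Z^i$ really is a Boltzmann process, i.e.\ that at each Kac jump the partner sampled for $Z^i$ has law $f_t$ \emph{conditionally on the past of $Z^i$}. But your $Y^i$ is a (deterministic, in one dimension) function of $V^j_{t^-}$, and $V^j_{t^-}$ is correlated with $Z^i_{t^-}$ through the joint history of the coupled pair $(\V,\Z)$. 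Hence $\Law(Y^i\mid Z^i_{t^-})\neq f_t$ in general, the weak formulation for $\Law(Z^i_t)$ does not close onto the kinetic equation, and you cannot assert $\Law(Z^i_t)=f_t$. This simultaneously invalidates the bound in (a) (since $(V^j_t,Z^j_t)$ is then not a coupling of $\Law(V^j_t)$ and $f_t$) and the later use of $\bar\Z_t$ as an empirical measure of $f_t$-samples.

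The paper resolves exactly this issue by coupling the partner of $Z^i$ not to $V^j_{t^-}$ but to the $\Z$-system: one uses a measurable map $F^i_t(\Z_{t^-},\xi)$ which, for each fixed $\z$, has law $f_t$ when $\xi$ is uniform on $[0,N)\setminus[i-1,i)$, and which realizes an optimal coupling between $\bar\Z^i_{t^-}$ and $f_t$. This guarantees that each $Z^i$ is a genuine Boltzmann process. The price is that the Kac contribution no longer cancels: one gets
\[
h'(t)\le -\tfrac{\mu}{2}h(t)+\lambda\,a(t)+2\lambda\,h(t)^{1/2}a(t)^{1/2},\qquad a(t)=\EE\bigl[W_2^2(\bar\Z^1_t,f_t)\bigr],
\]
and it is precisely the bound $a(t)\le C N^{-1/3}$ (obtained via the decoupling lemma and Fournier--Guillin, along the lines you sketch at the end) that feeds back into this inequality to produce \eqref{eq:UPoC}. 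In other words, the empirical-measure estimate is not only needed for the final triangle-inequality step but already inside the Gr\"onwall argument for $h(t)$; your cleaner inequality $h'(t)\le -\tfrac{\mu}{2}h(t)$ is obtained only by a coupling that destroys the property $\Law(Z^i_t)=f_t$ on which the whole scheme rests.
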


This theorem gives a uniform-in-time propagation of chaos rate of order $N^{-1/3}$, provided that the first term $W_2^2(f_0^N,f_0^{\otimes N})$ converges to 0 at the same rate or faster; for instance, one can simply take $f_0^N = f_0^{\otimes N}$, so the first term vanishes. The rate $N^{-1/3}$ is not so far from the optimal rate $N^{-1/2}$, valid for the convergence of the empirical measure of an $N$-tuple of i.i.d.\ variables towards their common law, with the same metric as in \eqref{eq:UPoC}; see \cite[Theorem 1]{fournier-guillin2013}. We remark that if one only assumes $\int |v|^r f_0(dv) < \infty$ for some $2<r<4$, we can still deduce \eqref{eq:UPoC}, but with a slower chaos rate of order $N^{-\eta(r)}$ for some $0<\eta(r)<1/3$. We also remark that the value $\mu/2$, corresponding to the rate of decay of the initial condition term in \eqref{eq:UPoC} (see also the contraction estimates given in Lemma \ref{lem:contraction_ps} and Lemma \ref{lem:contraction_thermostatedBoltzmannKac} below),  coincides with the spectral gap of the generator of the particle system, and also with the bound on the entropy production obtained in \cite{bonetto-loss-vaidyanathan2014}; see also \cite{tossounian-vaidyanathan2015} for the optimality of this bound.

The proof of Theorem \ref{thm:UPoC} is based on a coupling argument developed in \cite{cortez-fontbona2016} and later used in \cite{cortez2016}. This argument makes use of a probabilistic object called the \emph{Boltzmann process}, which is a stochastic proces $(Z_t)_{t\geq 0}$ satisfying $\Law(Z_t) = f_t$ for all $t\geq 0$. More specifically, we will construct our particle system $\V_t = (V_t^1,\ldots,V_t^N)$ using a Poisson point measure, and couple it with a collection $\Z_t = (Z_t^1,\ldots,Z_t^N)$ of Boltzmann processes, in 
a way that the two remain close on expectation.

The structure of the article is as follows.
In Section \ref{sec:PDE} we provide a notion of weak solution for the thermostated Boltzmann-Kac equation \eqref{eq:thermostatedBoltzmannKac}, valid for collections $(f_t)_{t\geq 0}$ of probability measures, and we then prove a well-posedness result for this notion. In Section \ref{sec:coupling_construction} we specify the coupling construction mentioned above and we prove Theorem \ref{thm:UPoC}. Along the way, we will use this construction to prove some interesting results, such as the equilibration in $W_2$ for the particle system in Lemma \ref{lem:contraction_ps}, and an analogous result for the kinetic equation in Lemma \ref{lem:contraction_thermostatedBoltzmannKac}. Some final comments are given in Section \ref{sec:conclusion}.

\section{Well-posedness for the kinetic equation}
\label{sec:PDE}


In this section, we define a notion of weak solution to \eqref{eq:thermostatedBoltzmannKac}, and prove its well-posedness. We will not require each $f_t$ to have a density; instead, it will be an element of the space $\MM$ of bounded non-negative Borel measures on $\RR$ metrized by \emph{total variation} $\Vert \cdot \Vert$. We will see that, if $f_0$ is a probability measure, then $f_t$ will also be a probability measure for all $t>0$. Similarly, if $f_0$ has a density, so will $f_t$.

For convenience, let us introduce the mapping $B:\MM \times \MM \rightarrow \MM$, given by 
\begin{equation*}
    \int_\RR \phi(x) B[\nu_1, \nu_2](dx) = \int_\RR \int_\RR \nu_1(dx) \nu_1(dy) \int_0^{2\pi} \phi(x \cos\theta + y \sin\theta) \frac{d\theta}{2\pi}
\end{equation*}
for all bounded and continuous function $\phi$.
Notice that when $\nu_1$ and $\nu_2$ have densities $g_1$ and $g_2$ with respect to the Lebesgue measure, then $B[\nu_1, \nu_2]$, also denoted by $B[g_1,g_2]$, satisfies
\[
    B[g_1,g_2](v)
    = \int_0^{2\pi}\int_\RR g_1(v') g_2(v_*') dv_* \frac{d\theta}{2\pi}.
\]
We note that \eqref{eq:thermostatedBoltzmannKac} is equivalent to
\[
\frac{d f_t}{dt}
= 2\lambda (B[f_t,f_t] - f_t) + \mu (B[f_t,\gamma] - f_t).
\]
This motivates the following notion of weak solution:

\begin{defi}
\label{def:weak}
A function $f \in C([0,\infty), \MM)$ is a \emph{weak solution} to \eqref{eq:thermostatedBoltzmannKac} with initial condition $f_0$ if, for all $t\geq 0$, we have
\begin{equation}\label{eq:thermostatedBoltzmannKacweak0}
    f_t = f_0 +  \int_0^t \{ 2\lambda (B[f_s, f_s] -f_s)  + \mu (B[f_s, \gamma] - f_s) \} ds.
\end{equation}
\end{defi}

We summarize some of the useful properties of the mapping $B$ in the following lemma, which we state without proof.

\begin{lem}\label{lem:Bproperties}
\begin{enumerate}
\item Monotonicity:  If  $\nu_1, \nu_2, \pi_1$, and $\pi_2$ in $\MM$ are such that
\[
    \nu_1(A) \geq \pi_1(A) \text{ and } \nu_2(A) \geq \pi_2(A) \quad \text{$ \forall A$ measurable},
\]
then 
\[
    B[\nu_1, \nu_2](A) \geq B[\pi_1, \pi_2](A), \quad \text{$ \forall A$ measurable}.
\]

\item Norm: for all $\nu_1, \nu_1 \in \MM$, it holds
\[
    \Vert B[\nu_1, \nu_2] \Vert = \Vert \nu_1 \Vert \Vert \nu_2 \Vert.
\]
If $\nu_1$ and $\nu_2$ are bounded, signed, Borel measures, then
\[
    \Vert B[\nu_1, \nu_2] \Vert \leq \Vert \nu_1 \Vert \Vert \nu_2 \Vert.
\]
\item Second moments and arbitrary moments: If $\nu_1$ and $\nu_2$ in $\MM$ have finite second moments $e_1$ and $e_2$ respectively, then
\[
    \int_\RR x^2 B[\nu_1, \nu_2] (dx) = \frac{ e_2 + e_2 }{2}.
\]
If $\nu_1$ and $\nu_2$ have finite $r^{\text{th}}$ moments $n_r$ and $m_r$ for some $r > 0$, then
\[
    \int_\RR \vert x\vert^r B[\nu_1, \nu_2](dx) \leq  2^{\max\{\frac{r}{2},1\}} \frac{n_r + m_r}{2} \int_0^{2\pi} | \cos\theta |^r \frac{d\theta}{2\pi}.
 \]
\end{enumerate}
\end{lem}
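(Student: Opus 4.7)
The plan is to prove each of the three items by direct computation from the defining formula for $B[\nu_1,\nu_2]$, choosing appropriate test functions $\phi$ and applying Fubini's theorem. The bilinearity of $B$ in its two arguments is immediate from the definition, and allows one to extend $B$ by linearity to signed measures through the Jordan decomposition $\nu = \nu^+ - \nu^-$; I will use this tacitly throughout.

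For the monotonicity in (i), I would take $\phi = \ind_A$ for a measurable $A \subseteq \RR$, which shows
\[
B[\nu_1,\nu_2](A) = \int_\RR \int_\RR \int_0^{2\pi} \ind_A(x\cos\theta + y\sin\theta)\,\frac{d\theta}{2\pi}\,\nu_1(dx)\,\nu_2(dy) \geq 0
\]
whenever $\nu_1,\nu_2 \in \MM$, so $B$ sends pairs of non-negative measures to non-negative measures. Writing $B[\nu_1,\nu_2] - B[\pi_1,\pi_2] = B[\nu_1-\pi_1,\nu_2] + B[\pi_1,\nu_2-\pi_2]$ then exhibits the difference as a sum of two such non-negative measures under the hypotheses $\nu_i \geq \pi_i$. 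For the norm identity in (ii), taking $\phi \equiv 1$ and applying Fubini immediately yields $\Vert B[\nu_1,\nu_2] \Vert = \Vert \nu_1 \Vert \Vert \nu_2 \Vert$ in $\MM$. For the signed-measure variant, I would Jordan-decompose $\nu_i = \nu_i^+ - \nu_i^-$ and expand $B[\nu_1,\nu_2]$ as an alternating sum of four non-negative measures $B[\nu_1^\epsilon,\nu_2^\delta]$; the triangle inequality for total variation, combined with the non-negative case, gives $\Vert B[\nu_1,\nu_2] \Vert \leq \sum_{\epsilon,\delta \in \{+,-\}} \Vert \nu_1^\epsilon \Vert \Vert \nu_2^\delta \Vert = \Vert \nu_1 \Vert \Vert \nu_2 \Vert$.

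For the moments in (iii), the key observation is that $x\cos\theta + y\sin\theta = \sqrt{x^2+y^2}\cos(\theta - \varphi(x,y))$ for an appropriate phase $\varphi(x,y)$, which by rotation-invariance of $d\theta/(2\pi)$ gives
\[
\int_0^{2\pi} |x\cos\theta + y\sin\theta|^r\,\frac{d\theta}{2\pi} = (x^2+y^2)^{r/2} \int_0^{2\pi} |\cos\theta|^r\,\frac{d\theta}{2\pi}.
\]
For $r=2$, expanding the integrand and using $\int_0^{2\pi} \cos^2\theta\,d\theta/(2\pi) = 1/2$ (with the vanishing cross-term $\int \sin\theta\cos\theta\,d\theta = 0$) produces the stated identity after integration against $\nu_1(dx)\nu_2(dy)$, under the implicit convention that $\nu_1,\nu_2$ have unit mass. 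For general $r>0$, it remains to bound $(x^2+y^2)^{r/2}$ in terms of $|x|^r + |y|^r$: subadditivity of $t\mapsto t^{r/2}$ on $[0,\infty)$ for $0<r\leq 2$ gives $(x^2+y^2)^{r/2} \leq |x|^r+|y|^r$, while Jensen's inequality for the convex function $t\mapsto t^{r/2}$ when $r\geq 2$ gives $(x^2+y^2)^{r/2} \leq 2^{r/2-1}(|x|^r+|y|^r)$. Combining these two regimes yields the stated prefactor $2^{\max\{r/2,1\}}$ multiplying $(n_r+m_r)/2$.

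No step in this argument presents a genuine obstacle: the lemma amounts to a collection of elementary manipulations of a bilinear integral. The only mild bookkeeping concerns tracking the Jordan decomposition in the signed-measure part of (ii), and selecting the correct piecewise form of the power inequality on $(x^2+y^2)^{r/2}$ in (iii) according to whether $r$ falls below or above $2$.
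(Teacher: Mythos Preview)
The paper explicitly states this lemma without proof, so there is nothing to compare against. Your argument is correct and complete: the bilinear decomposition for monotonicity, the choice $\phi\equiv 1$ plus Jordan decomposition for the norm identity and inequality, and the phase-shift identity $x\cos\theta+y\sin\theta=\sqrt{x^2+y^2}\cos(\theta-\varphi)$ combined with the piecewise power bound on $(x^2+y^2)^{r/2}$ for the moment estimates are exactly the natural verifications. Your remark that the second-moment identity (and, by the same token, the $r$-th moment bound) requires the implicit normalisation $\Vert\nu_1\Vert=\Vert\nu_2\Vert=1$ is apt; without it the right-hand sides acquire factors of $\Vert\nu_i\Vert$, and the paper's formulation is indeed only used for probability measures.
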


We are now ready to state and prove our well-posednes result:

\begin{thm}[well-posedness]
\label{thm:well_posedness}
For every \emph{probability measure} $f_0 \in \MM$, there is a unique solution $f$ to \eqref{eq:thermostatedBoltzmannKacweak}. $f_t$ is a probability measure for every $t$. If $f_0$ has a density or a finite $r^{\text{th}}$ moment for some $r\geq 2$, then
 so does $f_t$ for all $t$.
\end{thm}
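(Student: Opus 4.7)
The plan is to rewrite the weak equation as a fixed-point (Duhamel) equation and run a Picard argument in a cone of non-negative measures, where an a priori mass bound allows us to pass from local to global existence. Setting $\alpha = 2\lambda + \mu$, \eqref{eq:thermostatedBoltzmannKacweak0} is equivalent to the mild form
\begin{equation*}
f_t = e^{-\alpha t} f_0 + \int_0^t e^{-\alpha(t-s)}\bigl\{2\lambda\, B[f_s,f_s] + \mu\, B[f_s,\gamma]\bigr\}\,ds.
\end{equation*}
Fix $T>0$ and $M\geq 1$, and consider the complete metric space
\begin{equation*}
E_{T,M} = \bigl\{ f \in C([0,T],\MM) : f_s \geq 0,\ \|f_s\| \leq M \ \forall s \in [0,T] \bigr\},
\end{equation*}
with the sup-in-time total variation metric. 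Item 1 of Lemma \ref{lem:Bproperties} (monotonicity and positivity of $B$) shows that the right-hand side of the Duhamel formula defines a map $\Psi$ from $E_{T,M}$ into itself for $T$ sufficiently small. Item 2 (extended to signed measures) gives, for $f,g \in E_{T,M}$, the bilinear estimate
\begin{equation*}
\|B[f_s,f_s] - B[g_s,g_s]\| \leq 2M\,\|f_s - g_s\|,
\end{equation*}
and a similar bound with $\gamma$, yielding a contraction on $[0,T_0]$ for $T_0$ depending only on $M$, $\lambda$, $\mu$.

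Next I would check the structural properties. Testing \eqref{eq:thermostatedBoltzmannKacweak0} against the constant function $\phi \equiv 1$ and using $B[\nu_1,\nu_2](\RR) = \|\nu_1\|\|\nu_2\|$, one finds that $u(t) := \|f_t\|$ satisfies $u'(t) = 2\lambda\, u(t)(u(t) - 1)$, so the initial value $u(0) = 1$ forces $u(t) \equiv 1$. Positivity is transmitted to the Picard iterates because $e^{-\alpha t}f_0 \geq 0$ and $B$ maps non-negative measures to non-negative measures. Since $\|f_t\|$ is a priori bounded by $1$, the local existence time $T_0$ (obtained with $M=1$) is uniform, and we can restart and patch solutions to cover all of $[0,\infty)$. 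Uniqueness is then a direct Grönwall application to $\|f_t - g_t\|$ using the same quadratic Lipschitz bound and the uniform mass bound.

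For the two remaining preservation statements I would argue through the Picard iterates. If $f_0$ has a density, then the explicit formula $B[g_1,g_2](v) = \int_0^{2\pi}\!\int_\RR g_1(v')g_2(v_*')\,dv_*\,\frac{d\theta}{2\pi}$ shows $B$ maps densities to densities; hence every iterate does, and the property passes to the total-variation limit. For the $r$-th moment ($r\geq 2$), item 3 of Lemma \ref{lem:Bproperties} gives an affine bound of $\int|v|^r B[f_s,f_s]\,dv$ and $\int|v|^r B[f_s,\gamma]\,dv$ in $M_r(s) := \int|v|^r f_s(dv)$, producing a linear differential inequality for $M_r(t)$ and thus a finite bound via Grönwall.

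The main obstacle is the quadratic nonlinearity $B[f_s,f_s]$, which in a naive Picard scheme only yields a time of existence inversely proportional to the norm of the iterates. The pivotal observation that unlocks global well-posedness is that the mass bound $\|f_s\| \equiv 1$ holds a priori, thanks to the degenerate ODE $u'=2\lambda u(u-1)$ at $u=1$; this keeps the local existence time uniform and allows the solution to be continued globally.
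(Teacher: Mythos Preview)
Your proposal is correct and yields the same result, but the route differs from the paper's. You run a Banach contraction on the Duhamel formulation in $E_{T,1}$, using the a priori mass identity $u'(t)=2\lambda u(u-1)$, $u(0)=1$, to keep the local existence time uniform and to globalize; uniqueness is then a Gr\"onwall estimate on $\|f_t-g_t\|$ once both solutions are known to be probability measures. The paper instead uses a \emph{monotone} iteration \`a la Tanaka: starting from $u_t^0=e^{-\alpha t}f_0$ and iterating the Duhamel map, one checks via Lemma~\ref{lem:Bproperties}(i) that $(u_t^n)_n$ is increasing with mass $\leq 1$, hence converges in total variation to a solution $u_t$; the same mass ODE shows $u_t$ is a probability measure, and uniqueness comes from the observation that \emph{any} solution $g_t$ dominates every $u_t^n$ (hence $g_t\geq u_t$), while both have total mass $1$, forcing $g_t=u_t$. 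The trade-offs: your argument is the textbook one and would work for any bilinear operator with the bounds of Lemma~\ref{lem:Bproperties}(ii), but it needs mass conservation as a separate input to close the global extension and the Gr\"onwall uniqueness. The paper's monotone scheme exploits the positivity-preserving structure of $B$ more directly, giving existence and uniqueness without any contraction estimate; it also makes the density and moment propagation a one-line monotone convergence, whereas in your scheme the moment step requires a uniform-in-$n$ bound on the (non-monotone) Picard iterates plus a Fatou-type lower-semicontinuity argument to pass to the limit---a detail you should make explicit, since TV convergence alone does not control integrals against the unbounded test function $|v|^r$.
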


\begin{proof} We will use the following equivalent form of \eqref{eq:thermostatedBoltzmannKacweak0}:
\begin{equation}
\label{eq:thermostatedBoltzmannKacweak}
    f_t = e^{-(2\lambda+\mu) t} f_0 +  \int_0^t e^{-(2\lambda+\mu)(t-s)}\left( 2\lambda B[f_s, f_s]  + \mu B[f_s, \gamma] \right) ds.
\end{equation}
We use the iterative construction in \cite{tanaka.s1968}. Let $f_0$ be a Borel probability measure on $\RR$. Define the sub-probability measures $( u^n_t)_{n=0}^\infty$ inductively by
\begin{eqnarray}
    u^0_t & = & e^{-(2\lambda+\mu)t} f_0, \nonumber\\
    u^{n+1}_t & = & e^{-(2\lambda+\mu) t} f_0 + \int_0^t e^{-(2\lambda+\mu)(t-s)} \left( \mu B[ u^n_s, \gamma] + 2\lambda B[u^n_s, u^n_s] \right) ds \label{eq:iteration}
\end{eqnarray}
Using Lemma \ref{lem:Bproperties} we see that $u^n_t$ is continuous in $t$ for each $n$, that $u^n_t(\RR) \leq 1$, and that $(u^n_t)_n$ is increasing in $n$. Hence, for each $t$, $(u^n_t)_n$ converges to some element $u_t$ in $\MM$ and $u_t(\RR) \leq 1$. Note that $u_t-u_t^n$ is a non-negative measure for each $t$, thus we have convergence in total variation, since
\[
    \lim_{n\rightarrow \infty} \Vert u^n_t - u_t \Vert =  \lim_{n\rightarrow \infty}  u_t(\RR) - u^n_t(\RR) = 0.
\]
This, together with Lemma \ref{lem:Bproperties}, implies that 
\[
    \lim_{n\rightarrow \infty} \Vert B[u^n_t, \gamma] - B[u_t, \gamma] \Vert \leq \lim_{n\rightarrow \infty} \Vert u^n_t - u_t \Vert = 0,
\]
and
\[
     \lim_{n\rightarrow \infty} \Vert B[u^n_t, u^n_t] - B[u_t, u_t] \Vert \leq \lim_{n\rightarrow \infty} \Vert u^n_t - u_t \Vert (u^n_t(\RR) + u_t(\RR)) = 0.
\]
Thus we can take the infinite $n$ limit in \eqref{eq:iteration} and establish that $u_t$ solves \eqref{eq:thermostatedBoltzmannKacweak}. Being an increasing limit of continuous functions, $u: [0,\infty) \rightarrow \MM$ is lower semi-continuous, and thus measurable. Since $u_t(\RR) \leq 1$, $\forall t$, $u$ belongs to $L^\infty( [0,\infty), \MM)$. 
To show that $u_t$ is continuous (in $t$) we note that it equals
\[
    e^{-(2\lambda+\mu) t} f_0 + e^{-(2\lambda+\mu) t} \int_0^t e^{(2\lambda+\mu)s} \left( \mu B[ u_s, \gamma] + 2\lambda B[u_s, u_s] \right) ds
\]
and the integrand above is in the Bochner space $L^1([0,\tau], \MM )$ for all $\tau$. This makes $u_t$ continuous.
A consequence of this continuity is that:  $h(t)= u_t(\RR)$ is differentiable and satisfies the differential equation
\[
    h'(t) = -(2\lambda+\mu) h(t) + \mu h(t) + 2\lambda h(t)^2
\]
Since $h(0)=1$, $h(t) \equiv 1$. Hence, $u_t$ is a probability measure for all $t$.
To show the uniqueness of $u_t$, let $g_t \in C([0,\infty), \MM)$ satisfy \eqref{eq:thermostatedBoltzmannKacweak}. 
On one hand, $g_t\geq u^0_t$ by definition. And thus, by induction, $g_t \geq u^n_t$ a.e. $t$ for all $n$. By the monotone convergence theorem, we have
\begin{equation*}
    g_t(A) \geq u_t(A)
\end{equation*}
for every measurable set $A$. On the other hand, using Lemma \ref{lem:Bproperties}, for each $t$ we obtain
\[
    \int_0^t e^{-(2\lambda+\mu)(t-s)} \Vert (\mu B[g(s),\gamma] + 2\lambda B[g_s, g_s]) \Vert ds \leq  \mu \sqrt{t} \left(\int_0^t \Vert g_s \Vert^2 ds\right)^{\frac{1}{2}} + 2\lambda \int_0^t \Vert g_s\Vert^2 ds 
\]
which shows that $g_t$ is continuous in $t$, and just like $u_t$, must be a probability measure for all $t$. Thus, $\Vert g_t - u_t \Vert = g_t(\RR)- u_t(\RR) = 1 - 1 =0$.

To prove the last statement of the theorem, we note that if $f_0 \in L^1(\RR)$, then $u_t^n \in L^1(\RR)$ for all $\RR$ and we use the completeness of $L^1$ under the total variation norm. 
If $f_0$ has a finite $r^{\text{th}}$ moment for some $r>0$, then by Lemma \ref{lem:Bproperties} and induction, we see that, for each $t$, $(\int_\RR u_t^n(dv) \vert v\vert^r)_n$ is finite, monotone increasing, and bounded above by the solution $R(t)$ to the following integral equation:
\[
    R(t) = e^{-(2\lambda + \mu) t} \int_\RR \vert v\vert^r f_0(dv) + C_r\int_0^t e^{-(2\lambda+\mu)(t-s)} (2\lambda+ \frac{\mu}{2})R(s) ds + C_r \int_\RR \vert w\vert^r \gamma(dw).
\]
Here $C_r = 2^{\max\{\frac{r}{2},1\}}\int_0^{2\pi} \vert \cos\theta\vert^r \frac{d\theta}{2\pi}$ is as in Lemma \ref{lem:Bproperties}. $R(t)$ is finite due to Gronwall's inequality. 
The monotone convergence theorem implies that $R(t)$ controls the $r^{\text{th}}$ moment of $f_t$.
\end{proof}

It is straightforward to verify that, if $\int_\RR v^2 f_0(dv)<\infty$, then we have
\begin{equation}
\label{eq:second-moment}
\int_\RR v^2 f_t(dv) =
    \left( \int_\RR v^2 f_0(dv) \right) e^{-\frac{1}{2} \mu t } + T (1-e^{-\frac{1}{2} \mu t }).
 \end{equation}

\begin{rmk} The uniqueness of the solution to \eqref{eq:thermostatedBoltzmannKacweak0} holds in the larger space $L^2_\text{loc}([0, \infty), \MM)$, provided we identify functions $f_t$ that agree $t$-a.s.
\end{rmk}

\section{Coupling construction}
\label{sec:coupling_construction}

\subsection{Particle system}\label{subsec:particle-system}

We provide an explicit construction of the particle system using an SDE, following \cite{cortez-fontbona2016}. To this end, for fixed $N\in\NN$, let $\R(dt,d\theta,d\xi,d\zeta)$ be a Poisson point measure on $[0,\infty)\times[0,2\pi)\times [0,N)^2$ with intensity
\[
N\lambda dt \frac{d\theta}{2\pi} \frac{d\xi d\zeta \ind_{\{\ii(\xi) \neq \ii(\zeta)\}}}{N(N-1)}
= \frac{\lambda dt d\theta d\xi d\zeta \ind_{\{\ii(\xi) \neq \ii(\zeta)\}}}{2\pi(N-1)},
\]
where $\ii$ is the function that associates to a variable $\xi\in[0,N)$ the discrete index $\ii(\xi) = \lfloor \xi \rfloor +1 \in \{1,\ldots,N\}$. In words: at rate $N\lambda$, the measure $\R$ selects collision times $t\geq 0$, and for each such time, it independently samples a parameter $\theta$ uniformly at random on $[0,2\pi)$, and a pair $(\xi,\zeta) \in [0,N)^2$ such that $\ii(\xi) \neq \ii(\zeta)$, also uniformly. The pair $(\ii(\xi),\ii(\zeta))$ provides the indices of the particles involved in Kac-type collisions. The fact that we use continuous variables $\xi,\zeta \in [0,N)$, instead of discrete indices in $\{1,\ldots,N\}$, will be crucial to define our coupling with a collection of Boltzmann processes.

Let $\Q_1(dt, d\theta, dw),\ldots,\Q_N(dt, d\theta, dw)$ be a collection of independent Poisson point measures on $[0,\infty)\times [0,2\pi) \times \RR$, also independent of $\R$, each having intensity $\mu dt \frac{d\theta}{2\pi} \gamma(dw)$. Finally, let $\V_0 = (V_0^1,\ldots,V_0^N)$ be an exchangeable collection of random variables with $\Law(\V_0) = f_0^N$, independent of everything else.

The particle system $\V_t = (V_t^1,\ldots,V_t^N)$ is defined as the unique jump-by-jump solution of the SDE
\begin{equation}
    \label{eq:sde_ps}
    \begin{split}
        d\V_t
        &= \int_0^{2\pi} \int_{[0,N)^2} \sum_{i,j=1, i\neq j}^N [ \mathbf{a}_{ij}(\V_{t^\-},\theta) - \V_{t^\-} ] \ind_{\{ \ii(\xi)=i,\ii(\zeta)=j\}} \R(dt,d\theta,d\xi,d\zeta) \\
        &\quad {} + \sum_{i=1}^N \int_0^{2\pi} \int_\RR [\mathbf{b}_i(\V_{t^\-},\theta, w) - \V_{t^\-}] \Q_i(dt, d\theta, dw)
    \end{split}
\end{equation}
that starts at $\V_0$.
Here, for $\v \in \RR^N$, the vectors $\mathbf{a}_{ij}(\v,\theta)\in\RR^N$ and $\mathbf{b}_i(\v,\theta,w)\in\RR^N$ are defined as
\begin{align*}
\mathbf{a}_{ij}(\v,\theta)^k
&= \begin{cases}
v^i \cos\theta - v^j \sin\theta & \text{if $k=i$,} \\
v^i \sin\theta + v^j \cos\theta & \text{if $k=j$,} \\
v^k & \text{otherwise},
\end{cases} \\
\mathbf{b}_i(\v,\theta,w)^k
&= \begin{cases}
v^i\cos\theta - w \sin\theta & \text{if $k=i$,} \\
v^k & \text{otherwise}.
\end{cases}
\end{align*}
For any $i=1,\ldots,N$, from \eqref{eq:sde_ps} it follows that particle $V_t^i$ satisfies the SDE 
\begin{equation}
\label{eq:sde_vi}
    \begin{split}
        dV_t^i
        &= \int_0^{2\pi} \int_0^N [V_{t^\-}^i \cos\theta - V_{t^\-}^{\ii(\xi)}\sin\theta - V_{t^\-}^i] \P_i(dt,d\theta,d\xi) \\
        & \quad {} + \int_0^{2\pi} \int_\RR [V_{t^\-}^i \cos\theta - w\sin\theta - V_{t^\-}^i] \Q_i(dt, d\theta,dw),
    \end{split}
\end{equation}
where $\P_i$ is defined as
\[
\P_i(dt,d\theta,d\xi)
= \R(dt,d\theta,[i-1,i), d\xi)
+ \R(dt,-d\theta, d\xi, [i-1,i)),
\]
and where we use $-d\theta$ to transform $\sin\theta$ into $-\sin\theta$. Clearly, $\P_i$ is a Poisson point measure on $[0,\infty)\times[0,2\pi)\times[0,N)$ with intensity
\[
2 \lambda dt \frac{d\theta}{2\pi} \frac{d\xi \ind_{\{\ii(\xi)\neq i\}}}{N-1}.
\]

As mentioned earlier,  $f_t^N = \Law(\V_t)$ converges exponentially fast to the Gaussian density $\gamma^{\otimes N}$ in relative entropy, as shown in \cite[Theorem 3]{bonetto-loss-vaidyanathan2014}. Similarly, the following result provides equilibration in $W_2$, which does not require $f_t^N$ to have a density:

\begin{lem}[contraction and equilibration for the particle system]
\label{lem:contraction_ps}
Let $f_t^N$ and $\tilde{f}_t^N$ be the laws of the thermostated Kac $N$-particle systems starting from (possibly different) symmetric initial distributions $f_0^N$ and $\tilde{f}_0^N$, respectively. Then
\[
W_2^2(f_t^N , \tilde{f}_t^N)
\leq e^{-\frac{\mu}{2} t} W_2^2(f_0^N , \tilde{f}_0^N).
\]
Consequently, taking $\tilde{f}_0^N$ as the stationary distribution $\gamma^{\otimes N}$, gives
\[
W_2^2(f_t^N , \gamma^{\otimes N})
\leq e^{-\frac{\mu}{2} t} W_2^2(f_0^N , \gamma^{\otimes N}).
\]

\end{lem}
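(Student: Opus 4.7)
The plan is to build a synchronous coupling of two copies of the particle system driven by the \emph{same} Poisson point measures $\R$ and $\Q_1,\ldots,\Q_N$ from Section~\ref{subsec:particle-system}, and then track the evolution of the mean squared distance between the two copies. More precisely, let $(\V_0,\tilde{\V}_0)$ be an optimal coupling of $f_0^N$ and $\tilde{f}_0^N$ for the $W_2$ metric (which exists by standard theory, and is independent of $\R,\Q_1,\ldots,\Q_N$), and let $\V_t$ and $\tilde{\V}_t$ be defined as the respective jump-by-jump solutions of the SDE \eqref{eq:sde_ps} driven by these common measures. Both marginally have the correct laws $f_t^N$ and $\tilde{f}_t^N$, so this is a valid coupling, and hence
\[
W_2^2(f_t^N,\tilde{f}_t^N) \leq \frac{1}{N}\EE\biggl[\sum_{i=1}^N (V_t^i - \tilde{V}_t^i)^2\biggr].
\]

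The key computation is to show that this right-hand side decays at rate $\mu/2$. There are two types of jumps to analyze. For a Kac collision involving particles $i$ and $j$ with angle $\theta$, both copies rotate the pair $(V^i,V^j)$ and $(\tilde V^i,\tilde V^j)$ by the \emph{same} angle $\theta$, so by linearity the difference vector $((V^i-\tilde V^i),(V^j-\tilde V^j))$ is also rotated by $\theta$. Since rotations preserve the Euclidean norm in $\RR^2$, the quantity $(V^i-\tilde V^i)^2 + (V^j-\tilde V^j)^2$ is unchanged, hence Kac events contribute nothing. For a thermostat jump on particle $i$ with angle $\theta$ and thermostat velocity $w$, both copies transform as $V^i\mapsto V^i\cos\theta - w\sin\theta$ and $\tilde V^i\mapsto \tilde V^i\cos\theta - w\sin\theta$ using the same $w$, so the $w$-terms cancel and
\[
(V^i-\tilde V^i)^2 \;\mapsto\; (V^i-\tilde V^i)^2\cos^2\theta.
\]
Averaging over the uniform angle, $\int_0^{2\pi}\cos^2\theta\,\frac{d\theta}{2\pi}=\tfrac{1}{2}$, so each particle's contribution shrinks in expectation at rate $\mu(1-\tfrac{1}{2})=\mu/2$.

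Putting these two observations together via Dynkin's formula applied to $\Phi(\V,\tilde\V)=\sum_i (V^i-\tilde V^i)^2$, we obtain
\[
\frac{d}{dt}\EE\biggl[\sum_{i=1}^N (V_t^i - \tilde{V}_t^i)^2\biggr] = -\frac{\mu}{2}\,\EE\biggl[\sum_{i=1}^N (V_t^i - \tilde{V}_t^i)^2\biggr],
\]
and Gr\"onwall's lemma yields the exponential decay at rate $\mu/2$. Dividing by $N$ and using that our coupling was started optimally gives the first inequality; the second follows by taking $\tilde f_0^N = \gamma^{\otimes N}$ and using stationarity, since then $\tilde f_t^N = \gamma^{\otimes N}$ for all $t$.

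I do not expect a genuine obstacle here: the contraction rate comes entirely from the explicit $\cos^2\theta$ factor in the thermostat jump, and the rotational invariance kills the Kac part exactly. The only points requiring a little care are (a) justifying the Dynkin identity for the SDE \eqref{eq:sde_ps} — this reduces to checking that $\EE[\sum_i(V_t^i)^2]$ remains finite on $[0,T]$, which follows as in \eqref{eq:second-moment} applied in $\RR^N$; and (b) observing that the infimum in the definition of $W_2$ is over \emph{all} couplings (not merely exchangeable ones), so no symmetry constraint on the optimal coupling of $f_0^N$ and $\tilde f_0^N$ is needed.
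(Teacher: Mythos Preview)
Your proposal is correct and follows the same synchronous-coupling strategy as the paper: start both systems from an optimal $W_2$-coupling and drive them with the same Poisson measures $\R,\Q_1,\ldots,\Q_N$. The only cosmetic difference is that you track the full sum $\sum_i(V_t^i-\tilde V_t^i)^2$ and kill the Kac contribution \emph{pathwise} via rotational invariance, whereas the paper tracks the single coordinate $\EE[(V_t^1-\tilde V_t^1)^2]$ and cancels the Kac term \emph{in expectation} using exchangeability; your bookkeeping is arguably cleaner since it sidesteps any need for the initial coupling to be exchangeable.
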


\begin{proof}
Let $(\V_t)_{t\geq 0}$ and $(\tilde{\V}_t)_{t\geq 0}$ be the solutions to the SDE \eqref{eq:sde_ps} with respect to the same Poisson point measures $\R$, $\Q_1,\ldots,\Q_N$, but starting from initial conditions $(\V_0,\tilde{\V}_0)$ which we take as an optimal coupling between $f_0^N$ and $\tilde{f}_0^N$. Call $h(t) = \EE[(V_t^1 - \tilde{V}_t^1)^2]$, then $W_2^2(f_t^N,\tilde{f}_t^N) \leq \EE[\frac{1}{N} \sum_i (V_t^i - \tilde{V}_t^i)^2 ] = h(t)$ by exchangeability, with equality at $t=0$. Thus, it suffices to study $h(t)$. Since both $V_t^1$ and $\tilde{V}_t^1$ satisfy \eqref{eq:sde_vi} with $i=1$, when computing the increments of $(V_t^1-\tilde{V}_t^1)^2$ the terms $w \sin\theta$ cancel, thus obtaining
\begin{align}
    \notag
    h'(t)
    &= 2\lambda \EE \int_0^{2\pi} \int_1^N
    \left[ (V_t^1 \cos\theta - V_t^{\ii(\xi)}\sin\theta - \tilde{V}_t^1 \cos\theta + \tilde{V}_t^{\ii(\xi)} \sin\theta)^2 \right. \\
    \notag
    & \qquad \qquad \qquad \qquad \left. {}- (V_t^1 - \tilde{V}_t^1)^2 \right] \frac{d\theta d\xi}{2\pi (N-1)} \\
    \notag
    & \qquad {} + \mu \EE \int_0^{2\pi} \int_\RR \left[ (V_t^1 \cos\theta - \tilde{V}_t^1 \cos\theta)^2 - (V_t^1 - \tilde{V}_t^1)^2 \right] \frac{d\theta \gamma(dw)}{2\pi} \\
    \label{eq:dht_contraction}
    &= 2\lambda \EE \int_1^N
    \left[ -\frac{1}{2}(V_t^1 - \tilde{V}_t^1)^2  +   \frac{1}{2}(V_t^{\ii(\xi)} -  \tilde{V}_t^{\ii(\xi)})^2 \right] \frac{d\xi}{ N-1}
    - \frac{\mu}{2} h(t),
\end{align}
where we used that $\int_0^{2\pi} \cos^2\theta \frac{d\theta}{2\pi} = \frac{1}{2} = \int_0^{2\pi} \sin^2\theta \frac{d\theta}{2\pi}$ and $\int_0^{2\pi} \cos\theta\sin\theta \frac{d\theta}{2\pi} = 0$. Notice that
\[
\EE \int_1^N (V_t^{\ii(\xi)} -  \tilde{V}_t^{\ii(\xi)})^2 \frac{d\xi}{N-1}
= \EE \frac{1}{N-1} \sum_{i=2}^N (V_t^i - \tilde{V}_t^i)^2
= h(t),
\]
thus the first term in \eqref{eq:dht_contraction} vanishes, which then gives $h'(t) = -\frac{\mu}{2} h(t)$. The desired bound follows.
\end{proof}

\subsection{Coupling with Boltzmann processes}

For a given probability measure $f_0$, let $(f_t)_{t\geq 0}$ be the unique weak solution of \eqref{eq:thermostatedBoltzmannKac} given by Theorem \ref{thm:well_posedness}. We will now construct a stochastic process $(Z_t)_{t\geq 0}$, called the \emph{Boltzmann process}, such that $\Law(Z_t) = f_t$ for all $t\geq 0$. This process is the probabilistic counterpart of \eqref{eq:thermostatedBoltzmannKac}, and it represents the trajectory of a single particle immersed in the infinite population. It was first introduced by Tanaka \cite{tanaka1978} in the context of the Boltzmann equation for Maxwell molecules.

Consider a Poisson point measure $\P(dt,d\theta,dz)$ on $[0,\infty) \times [0,2\pi) \times \RR $ with intensity $2\lambda dt \frac{d\theta}{2\pi} f_t(dz)$, and an independent Poisson point measure $\Q(dt,d\theta,dw)$ on $[0,\infty) \times [0,2\pi) \times \RR$ with intensity $\mu dt \frac{d\theta}{2\pi} \gamma(dw)$. Consider also a random variable $Z_0$ with law $f_0$, independent of $\P$ and $\Q$. The process $Z_t$ is defined as the unique solution, starting from $Z_0$, to the stochastic differential equation
\begin{equation}
\label{eq:sde_boltzmann_process}
\begin{split}
dZ_t
&= \int_0^{2\pi} \int_\RR [Z_{t^\-} \cos\theta - z\sin\theta - Z_{t^\-}] \P(dt,d\theta,dz) \\
& \quad {} + \int_0^{2\pi} \int_\RR [Z_{t^\-}\cos\theta - w\sin\theta - Z_{t^\-}] \Q(dt,d\theta,dw).
\end{split}
\end{equation}
Strong existence and uniqueness of solutions for this SDE is straightforward, since the rates of $\P$ and $\Q$ are finite on bounded time intervals. To show that $\Law(Z_t) = f_t$, the argument is classical: one first shows that $\ell_t := \Law(Z_t)$ solves
\[
    \ell_t
    =  f_0 + \int_0^t \{ 2 \lambda (B[\ell_s, f_s] - \ell_s)
    + \mu ( B[\ell_s, \gamma] - \ell_s)\} ds,
\]
which is a linearized version of \eqref{eq:thermostatedBoltzmannKacweak0}. This equation has a unique solution in the space $C([0, \infty), \MM)$ because the mapping $\nu\mapsto B[\nu,f_t]$ is non-expanding in total variation for all $t$. Since $f_t$ is a solution of this linearized version, we must have that $\ell_t = f_t$.


Since $\Law(Z_t) = f_t$, we can thus use the Boltzmann process as a tool to prove properties of the solution of the thermostated Boltzmann-Kac equation \eqref{eq:thermostatedBoltzmannKac}. For instance, we have the following lemma, which will be needed later to prove our uniform-in-time propagation of chaos result.

\begin{lem}[propagation of moments]
\label{lem:moments} Let $(f_t)_{t\geq 0}$ be the weak solution to \eqref{eq:thermostatedBoltzmannKac}. Let $r\geq 2$, and assume that  $\int_\RR \vert v \vert^r f_0(dv)< \infty$. Then $\sup_{t \geq 0} \int_\RR \vert v\vert^r f_t(dv)<\infty$.
\end{lem}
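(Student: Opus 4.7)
The plan is to derive a closed differential inequality of the form $M_r'(t) \leq -\tfrac{c_r}{2} M_r(t) + \tilde K_r$ for $M_r(t) := \int_\RR |v|^r f_t(dv)$, with $c_r > 0$, from which the uniform bound follows by standard ODE comparison. Starting from the weak formulation \eqref{eq:thermostatedBoltzmannKacweak0} applied to the test function $\phi(v) = |v|^r$ (justified by truncating $\phi$ at level $M$ and using monotone convergence together with the local finiteness of $M_r$ granted by Theorem \ref{thm:well_posedness}), and exploiting the polar-coordinate identity
\begin{equation*}
\int_0^{2\pi} |v\cos\theta - z\sin\theta|^r \, \tfrac{d\theta}{2\pi} \;=\; C_r^* \, (v^2+z^2)^{r/2}, \qquad C_r^* := \int_0^{2\pi} |\cos\theta|^r \, \tfrac{d\theta}{2\pi},
\end{equation*}
one arrives at
\begin{equation*}
M_r'(t) \;=\; 2\lambda \bigl[C_r^* A_r(t) - M_r(t)\bigr] + \mu \bigl[C_r^* B_r(t) - M_r(t)\bigr],
\end{equation*}
where $A_r(t) := \iint (v^2+z^2)^{r/2} f_t(dv) f_t(dz)$ and $B_r(t) := \iint (v^2+w^2)^{r/2} f_t(dv) \gamma(dw)$.

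The heart of the argument is a Povzner-type refinement of the bound from Lemma \ref{lem:Bproperties}: for every $r \geq 2$ there exists $K_r > 0$ such that
\begin{equation*}
(v^2+z^2)^{r/2} \;\leq\; |v|^r + |z|^r + K_r \bigl( |v|^{r-1}|z| + |v||z|^{r-1} \bigr), \qquad v, z \in \RR.
\end{equation*}
Indeed, by homogeneity one may assume $|v| \geq |z|$ and set $s := |z|/|v| \in [0,1]$; the function $g(s) := (1+s^2)^{r/2} - 1 - s^r$ satisfies $g(0) = g'(0) = 0$, so $g(s)/s$ extends continuously to $[0,1]$ and is therefore bounded. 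Integrating this inequality against $f_t \otimes f_t$ and against $f_t \otimes \gamma$, and controlling the mixed moments via H\"older ($M_{r-1}(t) \leq M_r(t)^{(r-1)/r}$ and $M_1(t) \leq M_2(t)^{1/2}$), one gets $A_r(t) \leq 2 M_r(t) + 2 K_r M_r(t)^{(r-1)/r} M_2(t)^{1/2}$ and an analogous estimate for $B_r(t)$ involving the Gaussian moments $G_s := \int |w|^s \gamma(dw)$.

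The crucial structural fact is that $|\cos\theta|^r \leq \cos^2\theta$ yields $C_r^* \leq 1/2$ for $r \geq 2$, with strict inequality for $r > 2$. Substituting the bounds for $A_r, B_r$ into the ODE, the coefficient of $M_r(t)$ becomes $-c_r$ with $c_r := 2\lambda(1 - 2 C_r^*) + \mu(1 - C_r^*) \geq \mu/2 > 0$. Combined with the uniform second-moment bound $M_2(t) \leq \max(M_2(0), T)$ from \eqref{eq:second-moment} and Young's inequality $M_r^{(r-1)/r} \leq \varepsilon M_r + C_\varepsilon$ (with $\varepsilon$ small enough to absorb the sub-leading term into $-c_r M_r$), one ends up with $M_r'(t) \leq -\tfrac{c_r}{2} M_r(t) + \tilde K_r$, where $\tilde K_r$ depends only on $r, \lambda, \mu, T$ and $M_2(0)$; this gives $\sup_{t \geq 0} M_r(t) \leq M_r(0) + 2 \tilde K_r / c_r < \infty$. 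The hard part is the refined Povzner-type inequality together with the Young-inequality absorption, since the bound from Lemma \ref{lem:Bproperties} alone fails to produce dissipation as soon as $r > 2$ and $\lambda$ is large compared to $\mu$; justifying the weak form for the unbounded test function $|v|^r$ is only a standard truncation technicality.
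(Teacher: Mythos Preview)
Your proof is correct and follows essentially the same route as the paper's. The paper computes $h'(t)$ via the Boltzmann process representation rather than directly from the weak form, and bounds $|v\cos\theta - z\sin\theta|^r$ using $(a+b)^r \le a^r + b^r + 2^{r-1}(ab^{r-1}+a^{r-1}b)$ with $a=|v\cos\theta|$, $b=|z\sin\theta|$ instead of your polar identity followed by the Povzner bound on $(v^2+z^2)^{r/2}$; but the resulting differential inequality $h'(t)\le -c_r h(t)+C_2+C_3 h(t)^{1-1/r}$ and the dissipation constant $c_r = 2\lambda(1-2C_r^*) + \mu(1-C_r^*)$ are identical.
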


\begin{proof}
The case $r=2$ follows from \eqref{eq:second-moment}, so we assume $r>2$. Let $(Z_t)_{t\geq 0}$ be the Boltzmann process, i.e., the solution to \eqref{eq:sde_boltzmann_process}. Let $h(t) = \EE \vert Z_t \vert^r = \int_\RR \vert v\vert^r f_t(dv)$. We know from Theorem \ref{thm:well_posedness} that $h(t)<\infty$ for all $t$. Then $h(t)$ satisfies
\begin{align*}
    h'(t)
    &= 2\lambda \EE \int_{0}^{2\pi} \frac{d\theta}{2\pi} \int_{\RR}  f_t(dz) \left( \vert Z_{t^-} \cos\theta - z \sin\theta \vert^r - \vert Z_{t}\vert^r \right) \\
    & \quad {} +  \mu \EE \int_{0}^{2\pi} \frac{d\theta}{2\pi} \int_\RR \gamma(dw)\left( \vert Z_{t^-} \cos\theta - w \sin\theta \vert^r - \vert Z_{t}\vert^r \right).
\end{align*}
Note that $\EE|Z_t|^{r-1} \leq h(t)^{1-1/r}$ and $\EE|Z_t| \leq \max\{T, \int_\RR v^2 f_t(dv)\}^{1/2}$, thanks to \eqref{eq:second-moment} and Jensen's inequality. Using the inequality $(a+b)^r \leq a^r + b^r + 2^{r-1} (a b^{r-1} +a^{r-1} b)$ valid for $a,b \geq 0$, we thus obtain
\begin{equation}
\label{eq:moment_iniq}
    h'(t) \leq -C_1 h(t) + C_2  + C_3 h(t)^{1-1/r},
\end{equation}
where 
\[
 C_1 = 2\lambda \left(1 - 2\int_0^{2\pi} \vert \cos\theta \vert^r \frac{d\theta}{2\pi}\right) + \mu \left(1 - \int_0^{2\pi} \vert \cos\theta \vert^r \frac{d\theta}{2\pi} \right) >0 ,
\]
and $C_2, C_3>0$ are constants depending on $\lambda$, $\mu$, $r$, $T$, $\int_\RR v^2 f_t(dv)$, and some moments of $\gamma$ of order at most $r$. The statement follows from \eqref{eq:moment_iniq}.
\end{proof}

The Boltzmann process \eqref{eq:sde_boltzmann_process} is particularly useful in coupling arguments, as the next result shows. It provides contraction for the thermostated Boltzmann-Kac equation in $W_2$-distance:

\begin{lem}[contraction and equilibration for the thermostated Boltzmann-Kac equation]
\label{lem:contraction_thermostatedBoltzmannKac}
Let $f_t$, $\tilde{f}_t$ be the weak solutions to \eqref{eq:thermostatedBoltzmannKac} starting from some possibly different probability measures $f_0$, $\tilde{f}_0$. Then
\[
W_2^2(f_t,\tilde{f}_t)
\leq e^{-\frac{\mu}{2}t} W_2^2(f_0,\tilde{f}_0).
\]
Consequently, taking $f_0 = \gamma$, gives
\[
W_2^2(f_t,\gamma)
\leq e^{-\frac{\mu}{2}t} W_2^2(f_0, \gamma).
\]
\end{lem}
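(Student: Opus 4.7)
The plan is to mimic the proof of Lemma \ref{lem:contraction_ps}, replacing the particle system by two coupled Boltzmann processes $(Z_t)_{t\geq 0}$ and $(\tilde Z_t)_{t\geq 0}$ satisfying $\Law(Z_t)=f_t$ and $\Law(\tilde Z_t)=\tilde f_t$, driven by the same randomness insofar as possible, and then controlling $h(t):=\EE[(Z_t-\tilde Z_t)^2]$. Since $(Z_t,\tilde Z_t)$ is a coupling of $(f_t,\tilde f_t)$, the bound $W_2^2(f_t,\tilde f_t)\le h(t)$ will be automatic, and the second assertion of the lemma will follow from the first by taking $\tilde f_0=\gamma$ and using that $\gamma$ is stationary for \eqref{eq:thermostatedBoltzmannKac} (so $\tilde f_t\equiv\gamma$ by the uniqueness given in Theorem \ref{thm:well_posedness}).

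Concretely, I would first choose $(Z_0,\tilde Z_0)$ to be an optimal $W_2$-coupling of $(f_0,\tilde f_0)$, so that $h(0)=W_2^2(f_0,\tilde f_0)$. I would then drive the thermostat jumps of both processes with a \emph{single} Poisson point measure $\Q(dt,d\theta,dw)$ of intensity $\mu\,dt\,(d\theta/2\pi)\,\gamma(dw)$; this is legitimate because the thermostat distribution $\gamma$ does not depend on the underlying process. For the Kac jumps, I would introduce a joint Poisson point measure $\hat\P(dt,d\theta,dz,d\tilde z)$ on $[0,\infty)\times[0,2\pi)\times\RR^2$ with intensity $2\lambda\,dt\,(d\theta/2\pi)\,\pi_t(dz,d\tilde z)$, where $\pi_t$ is the one-dimensional $W_2$-optimal coupling of $f_t$ and $\tilde f_t$. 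Equivalently, at each jump one samples $u\in[0,1]$ uniformly and sets $z=F_t^{-1}(u)$, $\tilde z=\tilde F_t^{-1}(u)$, where $F_t,\tilde F_t$ are the cumulative distribution functions of $f_t,\tilde f_t$; this monotone rearrangement is $W_2$-optimal in one dimension and handles measurability cleanly. The marginals of $\hat\P$ have intensities $2\lambda\,dt\,(d\theta/2\pi)\,f_t(dz)$ and $2\lambda\,dt\,(d\theta/2\pi)\,\tilde f_t(d\tilde z)$, so $(Z_t)$ and $(\tilde Z_t)$ are bona fide Boltzmann processes in the sense of \eqref{eq:sde_boltzmann_process}.

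The core step is a direct computation of $h'(t)$ from the Poisson SDE. When a simultaneous thermostat event $(\theta,w)$ occurs, the $w\sin\theta$ terms cancel in the difference, so $(Z_t-\tilde Z_t)^2\mapsto(Z_t-\tilde Z_t)^2\cos^2\theta$; averaging over $\theta$ contributes $-\tfrac{\mu}{2}h(t)$. When a coupled Kac event $(\theta,z,\tilde z)$ occurs, $(Z_t-\tilde Z_t)^2$ becomes $\bigl((Z_t-\tilde Z_t)\cos\theta-(z-\tilde z)\sin\theta\bigr)^2$; averaging over $\theta$, using $\int\cos^2=\int\sin^2=\tfrac12$ and $\int\cos\sin=0$, gives an expected jump of $-\tfrac12(Z_t-\tilde Z_t)^2+\tfrac12(z-\tilde z)^2$. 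Since $\pi_t$ is $W_2$-optimal, $\int(z-\tilde z)^2\pi_t(dz,d\tilde z)=W_2^2(f_t,\tilde f_t)\le h(t)$, so the Kac contribution is bounded by
\[
2\lambda\left[-\tfrac{1}{2}h(t)+\tfrac{1}{2}W_2^2(f_t,\tilde f_t)\right]\le 0.
\]
Combining the two, $h'(t)\le-\tfrac{\mu}{2}h(t)$, and Gronwall concludes the proof: $W_2^2(f_t,\tilde f_t)\le h(t)\le e^{-\mu t/2}h(0)=e^{-\mu t/2}W_2^2(f_0,\tilde f_0)$.

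The main technical obstacle is the construction of $\hat\P$ with a \emph{time-dependent} coupling intensity $\pi_t$; in one dimension this is handled painlessly by the monotone rearrangement, whose quantile functions $F_t^{-1},\tilde F_t^{-1}$ are jointly measurable in $(t,u)$. The algebra of $h'$ is essentially that of Lemma \ref{lem:contraction_ps}, with the one structural difference that the ``incoming particle'' term $(z-\tilde z)^2$ does not vanish by exchangeability as in the particle system, but must be absorbed into $h(t)$ via the $W_2$-optimality of $\pi_t$.
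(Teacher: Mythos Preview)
Your proposal is correct and follows essentially the same route as the paper's proof: couple two Boltzmann processes via a shared thermostat Poisson measure and a joint Kac Poisson measure whose intensity uses a time-dependent $W_2$-optimal coupling $\pi_t$ of $(f_t,\tilde f_t)$, then compute $h'(t)$ and bound the Kac contribution by zero using $\int(z-\tilde z)^2\pi_t(dz,d\tilde z)=W_2^2(f_t,\tilde f_t)\le h(t)$. The only cosmetic difference is that you make the construction of $\pi_t$ explicit via the monotone rearrangement $u\mapsto(F_t^{-1}(u),\tilde F_t^{-1}(u))$, which is a clean way to handle joint measurability in $(t,u)$; the paper simply invokes an optimal coupling $\Pi_t$ without specifying it.
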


\begin{proof}
	For all $t\geq 0$, let $\Pi_t$ be an optimal coupling between $f_t$ and $\tilde{f}_t$, that is, $\Pi_t$ is a probability measure on $\RR\times\RR$ such that $\int (z-\tilde{z})^2 \Pi_t(dz,d\tilde{z}) = W_2^2(f_t,\tilde{f}_t)$. Let $\mathcal{S}(dt,d\theta,dz,d\tilde{z})$ be a Poisson point measure on $[0,\infty)\times[0,2\pi)\times\RR\times\RR$ with intensity $2\lambda dt \frac{d\theta}{2\pi}\Pi_t(dz,d\tilde{z})$, and define $\P(dt,d\theta,dz) = \mathcal{S}(dt,d\theta,dz,\RR)$ and $\tilde{\P}(dt,d\theta,d\tilde{z}) = \mathcal{S}(dt,d\theta,\RR,d\tilde{z})$. In words, $\P$ and $\tilde{\P}$ are Poisson point measures, with intensities $2\lambda dt \frac{d\theta}{2\pi} f_t(dz)$ and $2\lambda dt \frac{d\theta}{2\pi} \tilde{f}_t(d\tilde{z})$ respectively, which have the same atoms in the $t$ and $\theta$ variables, and with optimally-coupled realizations of $f_t$ and $\tilde{f}_t$ on the $z$ and $\tilde{z}$ variables. Also, let $\Q(dt,dw)$ be a Poisson point measure with intensity $\mu dt \gamma(dw)$ that is independent of $\mathcal{S}$, and set $\tilde{\Q} = \Q$. Let also $(Z_0, \tilde{Z}_0)$ be a realization of $\Pi_0$, independent of everything else; in particular we have $\EE[(Z_0-\tilde{Z}_0)^2] = W_2^2(f_0,\tilde{f}_0)$.
		
	Let $Z_t$ and $\tilde{Z}_t$ be the solutions to the SDE \eqref{eq:sde_boltzmann_process} with respect to $(\P,\Q)$ and $(\tilde{\P},\tilde{\Q})$, respectively, thus $\Law(Z_t) = f_t$ and $\Law(\tilde{Z}_t) = \tilde{f}_t$. Consequently, we have $W_2^2(f_t,\tilde{f}_t) \leq \EE[(Z_t - \tilde{Z}_t)^2] =: h(t)$.
	Using It\^{o} calculus, we have:
	\begin{align*}
	    h'(t)
	    &= 2\lambda \EE \int_0^{2\pi} \int_{\RR\times\RR} [(Z_t\cos\theta - z\sin\theta - \tilde{Z}_t\cos\theta + \tilde{z}\sin\theta)^2 - (Z_t-\tilde{Z}_t)^2] \Pi_t(dz,d\tilde{z}) \frac{d\theta}{2\pi} \\
	    & \quad {} + \mu \EE \int_0^{2\pi} \int_\RR [(Z_t \cos\theta - w\sin\theta - \tilde{Z}_t \cos\theta + w \sin\theta)^2 - (Z_t-\tilde{Z}_t)^2] \gamma(dw) \frac{d\theta}{2\pi} \\
	    &= 2\lambda \EE \int_0^{2\pi} \int_{\RR\times\RR} [(\cos^2\theta - 1)(Z_t-\tilde{Z}_t)^2 + (z-\tilde{z})^2\sin^2\theta ] \frac{d\theta}{2\pi} \Pi_t(dz,d\tilde{z})
	    - \frac{\mu}{2} h(t),
	\end{align*}
	where in the last step the cross term vanished because $\int_0^{2\pi} \cos\theta\sin\theta d\theta = 0$. Since $\int(z-\tilde{z})^2 \Pi_t(dz,d\tilde{z}) = W_2^2(f_t,\tilde{f}_t) \leq h(t)$, the integral in the last line is bounded above by 0. We thus obtain $h'(t) \leq - \frac{\mu}{2} h(t)$, which yields the result.
\end{proof}

We now specify the coupling construction that will allow us to prove our main result. We closely follow \cite{cortez-fontbona2016}, see also \cite{cortez2016}. The key idea is to define a system $\Z_t = (Z_t^1,\ldots,Z_t^N)$ of Boltzmann processes such that, for each $i=1,\ldots,N$, the process $Z_t^i$ mimics as closely as possible the dynamics of particle $V_t^i$. Comparing \eqref{eq:sde_vi} and \eqref{eq:sde_boltzmann_process}, we see that a way of achieving this is to define $Z_t^i$ as the solution of \eqref{eq:sde_vi}, but replacing $V_{t^\-}^{\ii(\xi)}$, which is a $\xi$-realization of the (random) empirical measure $\frac{1}{N-1} \sum_{j\neq i} \delta_{V_{t^\-}^j}$, with a $\xi$-realization of $f_t$. Moreover, we will do this in an \emph{optimal} way.

Specifically: we define $Z_t^i$ as the unique jump-by-jump solution to
\begin{equation}
\label{eq:sde_zi}
    \begin{split}
        dZ_t^i
        &= \int_0^{2\pi} \int_0^N [Z_{t^\-}^i \cos\theta - F_t^i(\Z_{t^\-},\xi)\sin\theta - Z_{t^\-}^i] \P_i(dt,d\theta,d\xi) \\
        & \quad {} + \int_0^{2\pi} \int_\RR [Z_{t^\-}^i \cos\theta - w\sin\theta - Z_{t^\-}^i] \Q_i(dt, d\theta,dw),
    \end{split}
\end{equation}
where we have used the same Poisson point measures $\P_i$ and $\Q_i$ as in \eqref{eq:sde_vi}. Here, $F^i$ is a measurable function $[0,\infty) \times \RR^N \times [0,N) \ni (t,\z,\xi) \mapsto F_t^i(\z,\xi) \in \RR$ with the following property: for any $t\geq 0$, $\z\in\RR^N$, and any random variable $U$ uniformly distributed on the set $[0,N) \backslash [i-1,i)$, the pair $(z^{\ii(U)}, F_t^i(\z, U))$ is an optimal coupling between the empirical measure $\bar{\z}^i := \frac{1}{N-1} \sum_{j\neq i} \delta_{z^j}$ and $f_t$. In other words,
\begin{equation}
    \label{eq:EW2ftZi}
\int_0^N \left(z^{\ii(\xi)} - F_t^i(\z,\xi) \right)^2 \frac{d\xi \ind_{\{\ii(\xi)\neq i\}}}{N-1}
= W_2^2(\bar{\z}^i, f_t).
\end{equation}
(The values of $F_t^i(\z,\xi)$ for $\xi\in[i-1,1)$ are irrelevant). We refer the reader to \cite[Lemma 3]{cortez-fontbona2016} for a proof of existence of such a function. The same result also ensures that $F_t^i$ satisfies the following: for any exchangeable random vector $\mathbf{X}$ in $\RR^N$, and any measurable function $\phi$, one has for $j\neq i$
\begin{equation}
    \label{eq:E_intj_phi_Fti}
    \EE \int_{j-1}^j \phi(F_t^i(\mathbf{X},\xi)) d\xi
    = \int_\RR \phi(v) f_t(dv).
\end{equation}

We take an initial condition $\Z_0 = (Z_0^1,\ldots,Z_0^N)$ with distribution $f_0^{\otimes N}$ and optimally coupled to $\V_0$, thus
\begin{equation}\label{eq:W2f0N}
\EE[ (V_0^1 - Z_0^1)^2 ]
= \EE\left[ \frac{1}{N} \sum_{i=1}^N (V_0^i - Z_0^i)^2 \right]
= W_2^2( f_0^N, f_0^{\otimes N}),
\end{equation}
by exchangeability. We have thus defined a collection $\Z_t = (Z_t^1,\ldots,Z_t^N)$, where each $Z_t^i$ is a Boltzmann process by construction; in particular, we have $\Law(Z_t^i) = f_t$. However, notice that $Z_t^i$ and $Z_t^j$ have a simultaneous jump whenever $V_t^i$ and $V_t^j$ undergo a Kac collision, which implies that $Z_t^i$ and $Z_t^j$ are \emph{not independent}. In order for this construction to be useful, one needs to prove that these Boltzmann processes become asymptotically independent as $N\to\infty$, as is done in \cite{cortez2016,cortez-fontbona2016}. This is the content of the following lemma, which moreover provides explicit rates in $N$, uniformly on time:

\begin{lem}[decoupling of Boltzmann processes]\label{lem:decoupling}
There exists a constant $C<\infty$ depending only on $\lambda$, $\mu$, $T$, and $\int v^2 f_0(dv)$, such that for all fixed $k\in\NN$ we have for all $t\geq 0$:
\[
W_2^2\left(\Law(Z_t^1,\ldots,Z_t^k), f_t^{\otimes k} \right)^2
\leq \frac{Ck}{N}.
\]
\end{lem}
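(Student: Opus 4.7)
The plan is to build, for each $i=1,\ldots,k$, an auxiliary Boltzmann process $\tilde{Z}_t^i$ with law $f_t$ such that (a) the $\tilde{Z}^i$ are mutually independent and (b) $\EE[(Z_t^i - \tilde{Z}_t^i)^2] = O(k/N)$ uniformly in $t$. Since an explicit coupling bounds the Wasserstein distance from above, this yields $W_2^2(\Law(Z_t^1,\ldots,Z_t^k), f_t^{\otimes k}) \le \frac{1}{k}\sum_{i=1}^k \EE[(Z_t^i - \tilde{Z}_t^i)^2] = O(k/N)$, which is the stated bound.

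For the construction, I define $\tilde{Z}^i$ via the same Boltzmann SDE \eqref{eq:sde_boltzmann_process} reusing the same thermostat measure $\Q_i$ (so thermostat jumps match exactly), and with Kac measure $\hat{\P}_i$ assembled from two independent pieces: first, for every ``safe'' atom $(t,\theta,\xi)$ of $\P_i$ with $\ii(\xi) > k$, include $(t,\theta,F_t^i(\Z_{t^-},\xi))$ in $\hat{\P}_i$; second, add an independent auxiliary Poisson measure with intensity $2\lambda\frac{k-1}{N-1}dt\frac{d\theta}{2\pi}f_t(dz)$. Take $\tilde{Z}_0^i := Z_0^i$. The key observation for independence is that safe atoms of $\hat{\P}_i$ originate from atoms of $\R$ in which one index equals $i$ and the other exceeds $k$; for distinct $i,j\le k$ these come from disjoint parts of $\R$ and are therefore independent. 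Combined with the independence of $\Q_i$ and of the fresh auxiliary measures, this yields that $(\tilde{Z}^i)_{i=1}^k$ are mutually independent. The marginal law $\Law(\tilde{Z}_t^i)=f_t$ follows from the defining property \eqref{eq:E_intj_phi_Fti} of $F_t^i$, which guarantees that the image intensity of the safe atoms is $2\lambda\frac{N-k}{N-1}dt\frac{d\theta}{2\pi}f_t(dz)$, making $\hat{\P}_i$ a Boltzmann-type driver with the correct rate $2\lambda dt\frac{d\theta}{2\pi}f_t(dz)$.

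For the quantitative bound, I set $h_i(t) := \EE[(Z_t^i - \tilde{Z}_t^i)^2]$ and apply It\^o calculus, exactly as in the proof of Lemma \ref{lem:contraction_thermostatedBoltzmannKac}. The thermostat contribution cancels the $w\sin\theta$ terms and yields $-\frac{\mu}{2}h_i(t)$. The safe Kac atoms are crucial: both $Z^i$ and $\tilde{Z}^i$ use the \emph{same} partner $F_t^i(\Z_{t^-},\xi)$, so after such a jump $Z^i - \tilde{Z}^i$ is multiplied by $\cos\theta$; averaging over $\theta$ produces $-\lambda\frac{N-k}{N-1}h_i(t)$. The only non-contracting contributions come from the two mismatched families of atoms (unsafe atoms of $\P_i$, where $Z^i$ jumps but $\tilde{Z}^i$ does not, and fresh atoms of $\hat{\P}_i$, conversely). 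Both occur at rate $2\lambda\frac{k-1}{N-1} = O(k/N)$, and the expected jump size of $(Z^i - \tilde{Z}^i)^2$ at a single event is bounded by a constant depending on $\int v^2 f_t(dv)$, $\EE[(Z_t^i)^2]$ and $\EE[(\tilde{Z}_t^i)^2]$, each of which is uniformly bounded thanks to \eqref{eq:second-moment} and Lemma \ref{lem:moments}. Several cross terms cancel after integration in $\theta$, leaving a total error of order $k/N$.

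Combining gives $h_i'(t) \le -\frac{\mu}{2}h_i(t) + Ck/N$ with $h_i(0)=0$, so Gronwall yields $h_i(t) \le C'k/N$ uniformly in $t$ and $i$; summing and dividing by $k$ closes the estimate. The main subtlety, and the step I would most carefully verify, is the independence claim: one must check that the decomposition of $\R$ into events involving indices in $\{1,\ldots,k\}$ versus elsewhere genuinely produces independent safe-atom processes across $i\le k$, and that the intensity of $\hat{\P}_i$ equals $2\lambda dt\frac{d\theta}{2\pi}f_t(dz)$ despite $F_t^i(\Z_{t^-},\xi)$ being a random function of the entire state $\Z_{t^-}$. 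Both facts ultimately rest on the exchangeability of $\Z_t$ and the coupling identity \eqref{eq:E_intj_phi_Fti}.
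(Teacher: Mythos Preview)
Your approach is essentially the paper's: build auxiliary independent Boltzmann processes $\tilde Z^1,\ldots,\tilde Z^k$ by keeping the safe Kac jumps (those not shared with another index $\le k$) and replacing the unsafe ones by fresh independent jumps, then control $h(t)=\EE[(Z_t^1-\tilde Z_t^1)^2]$ via It\^o calculus to obtain a Gronwall inequality $h'(t)\le -c\,h(t)+Ck/N$ with $h(0)=0$. The quantitative part of your argument matches the paper closely (the paper obtains the slightly stronger rate $c=\lambda+\mu/2$, but $\mu/2$ suffices).

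The gap is exactly where you locate it, but your proposed justification does not close it. Pushing the safe atoms forward to $z$-space via $(t,\theta,\xi)\mapsto(t,\theta,F_t^i(\Z_{t^-},\xi))$ and declaring $\hat\P_i$ to be a Poisson measure with intensity $2\lambda\,dt\,\frac{d\theta}{2\pi}\,f_t(dz)$ is not correct as stated: $\Z_{t^-}$ is random and depends on the full history of $\R$, and the image of a Poisson measure under a predictable random map is not in general a Poisson measure with deterministic intensity. Moreover, your disjoint-parts-of-$\R$ argument only shows that the underlying $\xi$-atoms are independent across $i\le k$; it says nothing about the resulting partner values $F_t^i(\Z_{t^-},\xi)$, all of which involve the \emph{same} process $\Z_{t^-}$. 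The paper avoids both problems by keeping the auxiliary driving measures $\tilde\P_i$ in $\xi$-space (where they are genuine independent Poisson measures, built from $\R$ and an independent copy $\tilde\R$) and letting $F_t^i(\Z_{t^-},\xi)$ appear as a predictable coefficient in the SDE for $\tilde Z^i$. Independence of the $\tilde Z^i$ then follows from the key \emph{pointwise} property that $F_t^i(\mathbf z,U)\sim f_t$ for every fixed $\mathbf z\in\RR^N$ when $U$ is uniform on $[0,N)\setminus[i-1,i)$ --- this is stronger than the averaged identity \eqref{eq:E_intj_phi_Fti} you invoke, and it is precisely what decouples the partner draws from the common state $\Z_{t^-}$.
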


\begin{proof}
The argument is the same as in \cite[Lemma 6]{cortez-fontbona2016} and \cite[Lemma 3]{cortez2016}, so we only provide the main steps of the proof here. The idea is to again use a coupling argument: for fixed $k \leq N$, we will define $k$ \emph{independent} Boltzmann processes $\tilde{Z}_t^1,\ldots,\tilde{Z}_t^k$ that remain close to $Z_t^1,\ldots,Z_t^k$ on expectation. To achieve this, each $\tilde{Z}_t^i$ will use the same randomness that defines $Z_t^i$ (i.e., the SDE \eqref{eq:sde_zi}), except when $Z_t^i$ has a simultaneous jump with $Z_t^j$ for some $j\in\{1,\ldots,k\}$, in which case either $\tilde{Z}_t^i$ or $\tilde{Z}_t^j$ will not jump.
To compensate for the missing jumps, we will use an additional independent source of randomness to define new jumps. Since on expectation this occurs only a proportion $k/N$ of the jumps of the collection $Z_t^1,\ldots,Z_t^k$, this construction will give the desired estimate.

To this end, let $\tilde{\R}$ be an independent copy of the Poisson point measure $\R$ introduced at the beginning of Section \ref{subsec:particle-system}, and for $i=1,\ldots,k$, define
\begin{align*}
\tilde{\P}_i(dt,d\theta,d\xi)
&= \R(dt,d\theta,[i-1,i), d\xi) \\
& \qquad {} + \R(dt,-d\theta, d\xi, [i-1,i)) \ind_{[k,N)}(\xi) \\
& \qquad {} + \tilde{\R}(dt, -d\theta, d\xi, [i-1,i)) \ind_{[0,k)}(\xi),
\end{align*}
which is a Poisson point measure with intensity $2\lambda dt d\theta d\xi \ind_{\{\ii(\xi)\neq i\}}/ [2\pi(N-1)]$, just as $\P_i$. Note that the Poisson measures $\tilde{\P}_1,\ldots,\tilde{\P}_k$ are independent by construction. Mimicking \eqref{eq:sde_zi}, we define $\tilde{Z}_t^i$ as the solution, starting from $\tilde{Z}_0^i = Z_0^i$, to the SDE
\begin{equation}
    \begin{split}
    \label{eq:SDE_tilde_zi}
    d\tilde{Z}_t^i
    &= \int_0^{2\pi} \int_0^N [\tilde{Z}_{t^\-}^i \cos\theta - F_t^i(\mathbf{Z}_{t^\-},\xi)\sin\theta - \tilde{Z}_{t^\-}^i] \tilde{\P}_i(dt,d\theta,d\xi) \\
    & \quad {} + \int_0^{2\pi} \int_\RR [\tilde{Z}_{t^\-}^i \cos\theta - w\sin\theta - \tilde{Z}_{t^\-}^i] \Q_i(dt, d\theta,dw).
    \end{split}
\end{equation}
It is clear that $\tilde{Z}_t^1,\ldots,\tilde{Z}_t^k$ is an exchangeable collection of Boltzmann processes. Moreover, using the independence of $\tilde{\P}_1,\ldots,\tilde{\P}_k$ and the fact that $F_t^i(\mathbf{z},\xi)$ has distribution $f_t$ for any $\mathbf{z}\in \RR^N$ and any $\xi$ uniformly distributed on $[0,N)\backslash [i-1,i)$, one can prove that the processes $\tilde{Z}_t^1,\ldots,\tilde{Z}_t^k$ are independent. For a full proof of this fact in a very similar setting, we refer the reader to \cite[Lemma 6]{cortez-fontbona2016}.

Call $h(t) := \EE[(Z_t^1 - \tilde{Z}_t^1)^2]$. By exchangeability, we have
\[
W_2^2\left(\Law(Z_t^1,\ldots,Z_t^k) , f_t^{\otimes k} \right)
\leq \EE\left[ \frac{1}{k} \sum_{i=1}^k (Z_t^i - \tilde{Z}_t^i)^2\right]
= h(t),
\]
thus it suffices to obtain the desired estimate for $h(t)$. From \eqref{eq:sde_zi} and \eqref{eq:SDE_tilde_zi}, using It\^o calculus, we obtain:
\begin{equation}
    \begin{split}
    \label{eq:dht_decoupling}
    h'(t)
    &= \EE \int_0^{2\pi} \int_0^N \Delta_1 \left[\R(dt,d\theta,[0,1), d\xi) + \R(dt,-d\theta,d\xi,[0,1))\ind_{[k,N)}(\xi)\right] \\
    & \qquad {} + \EE \int_0^{2\pi} \int_0^N \Delta_2  \R(dt,-d\theta,d\xi,[0,1))\ind_{[0,k)}(\xi) \\
    & \qquad {} + \EE \int_0^{2\pi} \int_0^N \Delta_3  \tilde{\R}(dt,-d\theta,d\xi,[0,1))\ind_{[0,k)}(\xi) \\
    & \qquad {} + \EE \int_0^{2\pi} \int_\RR \Delta_4 \Q_1(dt,d\theta,dw),
    \end{split}
\end{equation}
where $\Delta_1$ corresponds to the increment of $(Z_t^1 - \tilde{Z}_t^1)^2$ when $Z_t^1$ and $\tilde{Z}_t^1$ have a simultaneous Kac-type jump, $\Delta_2$ is the increment when only $Z_t^1$ jumps, $\Delta_3$ is the increment when only $\tilde{Z}_t^1$ jumps, and $\Delta_4$ is the increment when there is a thermostat interaction.

Thanks to the indicator $\ind_{[0,k)}(\xi)$, the fact that $\Delta_2$ and $\Delta_3$ involve only second-order products of $f_t$-distributed variables, using \eqref{eq:E_intj_phi_Fti}, and recalling that \eqref{eq:second-moment} implies that $\int v^2 f_t(dv) \leq \max \left\{ \int v^2 f_0(dv) , T \right\}$, we deduce that the second and third terms in \eqref{eq:dht_decoupling} are bounded above by $\frac{Ck}{N}$. On the other hand, since the term $F_t^i(\mathbf{Z}_{t^\-},\xi)$ appears in both \eqref{eq:sde_zi} and \eqref{eq:SDE_tilde_zi}, it will cancel out in $\Delta_1$; more specifically, we have
\begin{align*}
    \Delta_1 &= \left[Z_{t^\-}^1 \cos\theta - F_t^1(\mathbf{Z}_{t^\-},\xi) \sin\theta
    -\tilde{Z}_{t^\-}^1 \cos\theta + F_t^1(\mathbf{Z}_{t^\-},\xi) \sin\theta
    \right]^2
    - (Z_{t^\-}^1 - \tilde{Z}_{t^\-}^1)^2 \\
    &= - (1-\cos^2\theta) (Z_{t^\-}^1 - \tilde{Z}_{t^\-}^1)^2
    \leq 0.
\end{align*}
Similarly, it can be easily seen that $\Delta_4 = -(1-\cos^2\theta) (Z_{t^\-}^1-\tilde{Z}_{t^\-}^1)^2$,
then the last term in \eqref{eq:dht_decoupling} is equal to $-\frac{\mu}{2} h(t)$. Thus, simply discarding the term $\Delta_1 \ind_{[k,N)}(\xi) \leq 0$ in the first line of \eqref{eq:dht_decoupling}, we deduce that
\begin{align*}
    h'(t)
    &\leq  -\EE \int_0^{2\pi} \int_1^N (1-\cos^2\theta) (Z_t^1 - \tilde{Z}_t^1)^2 \frac{2 \lambda d\theta d\xi}{2\pi(N-1)}
    + \frac{Ck}{N} - \frac{\mu}{2} h(t) \\
    &= -(\lambda + \mu/2) h(t) + \frac{Ck}{N}.
\end{align*}
Thus $h'(t) + (\lambda + \frac{\mu}{2}) h(t) \leq \frac{Ck}{N}$. Since $h(0) = 0$, the desired bound follows from the last inequality by multiplying by $e^{(\lambda + \frac{\mu}{2})t}$ and integrating.
\end{proof}

We now want to obtain an estimate for the decoupling property of the system of Boltzmann processes in terms of $\EE[W_2^2(\bar{\Z}_t, f_t)]$; this is the content of Lemma \ref{lem:decouplingEW2} below. To this end, we will need to recall two results.

For a probability measure $\nu$ on $\RR$ and for any $k\in\NN$, we will let $\varepsilon_k(\nu)$ be given by
\[
    \varepsilon_k(\nu) = \EE[ W_2^2(\bar{\mathbf{X}}, \nu)]
\]
where $\mathbf{X} = (X_1, \dots, X_k)$ is a collection of i.i.d. variables with law $\nu$. The first result, see \cite[Theorem 1]{fournier-guillin2013}, provides rates of convergence for $\varepsilon_k(\nu)$: if $\nu$ has a finite $r^{\text{th}}$ moment for some $r>4$, then there is a constant $C_r$ that depends only on $r$ such that
\begin{equation}
    \label{eq:eps}
    \varepsilon_k(\nu) \leq \frac{C_r \int \vert x \vert^r \nu(dx)}{k^{1/2}}.
\end{equation}
The second result, which is a special case of \cite[Lemma 7]{cortez-fontbona2016}, states that if $\mathbf{X}$ is any exchangeable random vector on $\RR^N$ and $\nu$ is \emph{any} probability measure on $\RR$, then there is a constant $C$ depending only on the second moments of $X^1$ and $\nu$ such that for any $k \leq N$ we have:
\begin{equation}
    \label{eq:W2X}
     \frac{1}{2} \EE[ W_2^2(\bar{\mathbf{X}}, \nu)] \leq W_2^2( \Law(X^1, \dots, X^k), \nu^{\otimes k}) + \varepsilon_k(\nu) + C \frac{k}{N}.
\end{equation}

We are now ready to state and prove:

\begin{lem}
\label{lem:decouplingEW2} 
Assume that $\int_\RR f_0(dv) \vert v \vert^r < \infty$ for some $r>4$. Then there is a constant $C$ depending only on $\lambda$, $\mu$, $T$, $r$, and $\int_\RR f_0(dv) \vert v\vert^r$, such that for all $t\geq 0$ we have
\[
    \EE[ W_2^2( \bar{\mathbf{Z}}_t, f_t) ] \leq \frac{C}{N^{1/3}}.
\]
Moreover, this bound also holds if we replace $\bar{\mathbf{Z}}_t$ by $\bar{\mathbf{Z}}_t^i = \frac{1}{N-1} \sum_{j\neq i} \delta_{Z_t^j}$.
\end{lem}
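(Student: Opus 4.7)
The plan is to assemble the three preliminary ingredients stated just before the lemma, and then optimize over the free parameter $k$. First I would apply \eqref{eq:W2X} with $\mathbf{X} = \Z_t$ (legitimate since $\Z_t$ is exchangeable by construction) and $\nu = f_t$: the constant there depends only on the second moments of $Z_t^1$ and of $f_t$, both equal to $\int v^2 f_t(dv)$, which is uniformly bounded in $t$ by \eqref{eq:second-moment}. This gives, for any $k \leq N$,
\[
\EE[W_2^2(\bar{\Z}_t, f_t)] \leq 2 W_2^2\bigl(\Law(Z_t^1,\ldots,Z_t^k), f_t^{\otimes k}\bigr) + 2\varepsilon_k(f_t) + \frac{C' k}{N}.
\]

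Next I would control the two remaining terms uniformly in $t$. For the first, Lemma \ref{lem:decoupling} gives $W_2^2(\Law(Z_t^1,\ldots,Z_t^k),f_t^{\otimes k}) \leq Ck/N$. For the second, the hypothesis $\int |v|^r f_0(dv) < \infty$ with $r>4$ together with Lemma \ref{lem:moments} yields $M_r := \sup_{t\geq 0} \int|v|^r f_t(dv) < \infty$ with $M_r$ depending only on the stated quantities; feeding this into \eqref{eq:eps} gives $\varepsilon_k(f_t) \leq C_r M_r / k^{1/2}$ uniformly in $t$. Combining everything,
\[
\EE[W_2^2(\bar{\Z}_t, f_t)] \leq C_1 \frac{k}{N} + \frac{C_2}{\sqrt{k}}.
\]
The two terms balance at $k \sim N^{2/3}$, so I would take $k = \lfloor N^{2/3} \rfloor$ to obtain the bound $C/N^{1/3}$ (for the few small $N$ where this choice is out of range, the bound holds trivially from the uniform second-moment control).

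For the second statement about $\bar{\Z}_t^i$, I would reduce to the first via the triangle inequality for $W_2$:
\[
W_2^2(\bar{\Z}_t^i, f_t) \leq 2 W_2^2(\bar{\Z}_t^i, \bar{\Z}_t) + 2 W_2^2(\bar{\Z}_t, f_t),
\]
and it suffices to show $\EE[W_2^2(\bar{\Z}_t^i, \bar{\Z}_t)] = O(1/N)$. For this I would exhibit the explicit coupling that matches each $Z_t^j$ ($j\neq i$) to itself with weight $1/N$, and transports the leftover mass $1/[N(N-1)]$ at each $Z_t^j$ ($j \neq i$) on the $\bar{\Z}_t^i$ side to the mass $1/N$ at $Z_t^i$ on the $\bar{\Z}_t$ side. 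This produces $W_2^2(\bar{\Z}_t^i, \bar{\Z}_t) \leq \frac{1}{N(N-1)} \sum_{j\neq i} (Z_t^j - Z_t^i)^2$, whose expectation is $O(1/N)$ thanks to the uniform-in-$t$ second moment bound on the $Z_t^j$.

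No serious obstacle is anticipated: the argument is a direct assembly. The only genuine subtlety is the $k$-balancing producing $N^{-1/3}$ rather than the parametric $N^{-1/2}$ rate, the loss being attributable to the chaoticity cost $k/N$ coming from Lemma \ref{lem:decoupling}. Care must be taken only to ensure that every constant is uniform in time, which in each case reduces to the uniform moment propagation provided by Lemma \ref{lem:moments} and \eqref{eq:second-moment}.
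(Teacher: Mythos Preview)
Your proof is correct and follows the paper's argument essentially verbatim for the main estimate: apply \eqref{eq:W2X}, bound the first term by Lemma~\ref{lem:decoupling}, bound $\varepsilon_k(f_t)$ via Lemma~\ref{lem:moments} and \eqref{eq:eps}, then balance with $k\sim N^{2/3}$. For the statement about $\bar{\Z}_t^i$ you take a slightly different route---a triangle inequality plus an explicit transport between $\bar{\Z}_t^i$ and $\bar{\Z}_t$---whereas the paper simply reapplies \eqref{eq:W2X} directly to the $(N-1)$-vector $(Z_t^j)_{j\neq i}$; both are short and valid, with your version avoiding a second invocation of the machinery at the cost of one extra coupling computation.
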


\begin{proof}
For $k\leq N$, \eqref{eq:W2X} applied to $\nu=f_t$ and $\mathbf{X} = \mathbf{Z}_t$ gives:
\begin{align*}
\frac{1}{2} \EE [W_2^2(\bar{\mathbf{Z}}_t, f_t)]
& \leq W_2^2(\Law(Z_t^1,\ldots,Z_t^k), f_t^{\otimes k})
+ \varepsilon_k(f_t) + C\frac{k}{N} \\
& \leq C\frac{k}{N} + \varepsilon_k(f_t) + C\frac{k}{N},
\end{align*}
where in the last step we used Lemma \ref{lem:decoupling}. The finite initial $r^{\text{th}}$ moment hypothesis, together with Lemma \ref{lem:moments}, implies that 
\[
    \sup_{t\geq 0}\int_\RR \vert v\vert^r f_t(dv) < \infty.
\]
Thus, from \eqref{eq:eps}, we obtain $\varepsilon_k(f_t) \leq C/k^{1/2}$ for all $t\geq 0$ (since $r>4$). Taking $k \sim N^{2/3}$ gives the result. The estimate for $\bar{\mathbf{Z}}_t^i$ is deduced similarly, taking $\mathbf{X} = (Z_t^j)_{j\neq i}$ in \eqref{eq:W2X}.
\end{proof}

We now prove Theorem \ref{thm:UPoC}.

\begin{proof}
Call $h(t) = \EE[(V_t^1-Z_t^1)^2]$. Using Lemma \ref{lem:decouplingEW2} and exchangeability, we obtain
\begin{align*}
\EE [W_2^2(\bar{\mathbf{V}}_t, f_t)]
&\leq 2 \EE [W_2^2(\bar{\mathbf{V}}_t, \bar{\mathbf{Z}}_t)]
     + 2 \EE [W_2^2(\bar{\mathbf{Z}}_t, f_t)] \\
&\leq 2 \EE \left[ \frac{1}{N}\sum_{i=1}^N (V_t^i - Z_t^i)^2 \right]
     + \frac{C}{N^{1/3}} \\
&= 2 h(t) + \frac{C}{N^{1/3}}.
\end{align*}
Thus, it suffices to prove that $h(t) \leq 2 e^{-\frac{\mu}{2} t} h(0) + C N^{-1/3}$, because $h(0) = W_2^2(f_0^N, f_0^{\otimes N})$ thanks to \eqref{eq:W2f0N}.

We thus study the evolution of $h(t)$. We have
\begin{equation*}
h'(t)
= S_t^K + S_t^T.
\end{equation*}
Here $S_t^K$ corresponds to the Kac interactions coming from the $\P_i$ terms in \eqref{eq:sde_vi} and \eqref{eq:sde_zi}, and $S_t^T$ corresponds to the thermostat interactions coming from the $\Q_i$ terms.
For brevity, let us call $V_t^\ii = V_t^{\ii(\xi)}$, $Z_t^\ii = Z_t^{\ii(\xi)}$, and $F_t^1 = F_t^1(\mathbf{Z}_{t^\-},\xi)$. We now study each of $S_t^K$ and $S_t^T$.
For the Kac term $S_t^k$, we recall that the intensity of $\P_1(dt,d\theta,d\xi)$ is $\frac{2 \lambda dt d\theta d\xi \ind_{\{\ii(\xi)\neq 1\}}}{2\pi(N-1)}$. Thus from \eqref{eq:sde_vi} and \eqref{eq:sde_zi}, using It\^{o} calculus, for $S_t^K$ we obtain:
\begin{align}
    \notag
    S_t^K
    &= \EE \int_0^{2\pi} \int_1^N
    \left[ \left(V_t^1 \cos\theta - V_t^{\ii} \sin\theta
    - Z_t^1 \cos\theta + F_t^1 \sin\theta \right)^2 - (V_t^1 - Z_t^1)^2
    \right] \frac{2\lambda d\theta d\xi}{2\pi(N-1)} \\
    \notag
    &= \EE \int_0^{2\pi} \int_1^N
    \left[ (V_t^1 - Z_t^1)^2 (\cos^2\theta -1)  +  (V_t^\ii - F_t^1)^2 \sin^2\theta
    \right] \frac{2\lambda d\theta d\xi}{2\pi(N-1)} \\
    \label{eq:StK_prelim}
    &= 2\lambda \left[
    -\frac{1}{2}h(t) + \frac{1}{2} \EE \int_1^N (V_t^\ii - F_t^1)^2 \frac{d\xi}{N-1}
    \right],
\end{align}
where in the second equality the cross-term vanished since $\int_0^{2\pi} \cos\theta \sin\theta d\theta = 0$. 
We now control the positive term in \eqref{eq:StK_prelim} by subtracting and then adding $Z_t^\ii$ inside the square.
Set $a(t)$ to be $\EE \int_1^N (Z_t^\ii - F_t^1)^2 \frac{d\xi}{N-1}$, thus $a(t) = \EE[W_2^2(\bar{\mathbf{Z}}_t^1, f_t)]$ thanks to \eqref{eq:EW2ftZi}. 
Also note that $\EE \int_1^N (V_t^\ii- Z_t^\ii)^2 \frac{d\xi}{N-1} = \frac{1}{N-1}\sum_{i=2}^N \EE (V_t^j- Z_t^j)^2$ which equals $h(t)$ by exchangeability. Therefore, we have
\begin{align*}
    \EE \int_1^N (V_t^\ii - F_t^1)^2 \frac{d\xi}{N-1}
    &= h(t) + a(t) + 2\EE \int_1^N (V_t^\ii - Z_t^\ii)(Z_t^\ii - F_t^1) \frac{d\xi}{N-1} \\
    &\leq h(t) + a(t) + 2 h(t)^{1/2} a(t)^{1/2},
\end{align*}
where we have used the Cauchy-Schwarz inequality. Plugging this into \eqref{eq:StK_prelim} gives
\[
    S_t^K
    \leq \lambda a(t) + 2\lambda h(t)^{1/2} a(t)^{1/2}.
\]
Next, for the thermostat term $S_t^T$, we recall that the intensity of $\Q_1(dt,d\theta, dw)$ is $\mu dt \frac{d\theta}{2\pi}\gamma(dw)$. Thus, again from \eqref{eq:sde_vi} and \eqref{eq:sde_zi}, we have for $S_t^T$:
\begin{align*}
    S_t^T
    & = \mu \EE\int_\RR \int_0^{2\pi} \left[ (V_t^1 \cos\theta - w \sin\theta - Z_t^1 \cos\theta + w\sin\theta)^2 - (V_t^1-Z_t^1)^2 \right] \frac{d\theta}{2\pi} \gamma(dw) \\
    &= -\frac{\mu}{2} h(t).
\end{align*}
Joining the bounds for $S_t^K$ and $S_t^T$, we see that
\begin{equation}
    \label{eq:dht_UPoC_final}
    h'(t) \leq -\frac{\mu}{2}h(t) + \lambda a(t) + 2\lambda  h(t)^{1/2} a(t)^{1/2}.
\end{equation}
Lemma \ref{lem:decouplingEW2} showed that $a(t) \leq C/N^{1/3}$. Thus, the Theorem follows from \eqref{eq:dht_UPoC_final} by a Gronwall-type inequality (see for example \cite[Lemma 4.1.8]{ambrosio-gigli-savare2008}).
\end{proof}

\section{Conclusion}
\label{sec:conclusion}



In this work we showed that the thermostated Kac $N$-particle system propagates chaos uniformly in time, at a polynomial rate of $N^{-1/3}$ for the $2$-Wasserstein metric, improving the propagation of chaos result in \cite{bonetto-loss-vaidyanathan2014}. This illustrates that the coupling method in \cite{cortez-fontbona2016} can be adapted to include thermostats.  We also use coupling arguments to deduce equilibration estimates for both the particle system and the kinetic equation.

We plan on developing this coupling method further to a Kac-type model where, in addition to the particle collisions \eqref{eq:kac} and the thermostat interactions \eqref{eq:thermostat}, the system has an \emph{energy restoring mechanism} that pushes the total energy of the system to its initial value after each interaction with the thermostat. This is the subject of future research.

\bigskip

\paragraph{Acknowledgements.}
We would like to thank Federico Bonetto and Joaquin Fontbona for fruitful discussions during our work.

\bibliographystyle{plain}
\bibliography{references.bib}{}

\end{document}